\documentclass[reqno]{amsart}
\usepackage{graphicx}
\usepackage{geometry}
\usepackage[leqno]{amsmath}
\usepackage{amscd}
\usepackage{amsthm}
\usepackage{amsfonts}
\usepackage{amssymb}
\usepackage[colorlinks=true]{hyperref}
\usepackage{xcolor}
\usepackage{mathrsfs}
\newtheorem{theorem}{Theorem}[section]

\newtheorem{corollary}[theorem]{Corollary}
\newtheorem{proposition}[theorem]{Proposition}
\newtheorem{Lemma}[theorem]{Lemma}

\theoremstyle{definition}
\newtheorem{definition}[theorem]{Definition}
\newtheorem{example}[theorem]{Example}

\newtheorem{remark}[theorem]{Remark}
\makeatletter
\newcommand{\leqnomode}{\tagsleft@true}
\newcommand{\reqnomode}{\tagsleft@false}
\makeatother

\DeclareMathOperator{\End}{End}
\DeclareMathOperator{\Aut}{Aut}
\DeclareMathOperator{\Ab}{Ab}
\DeclareMathOperator{\Map}{Map}

\newcommand{\gen}[1]{\langle #1 \rangle}

\newcommand{\id}{\mathrm{id}}
\newcommand{\G}[1]{(G,\circ_{#1})}
\newcommand{\Bee}[2]{(G,\circ_{#1},\circ_{#2})}

\numberwithin{equation}{section}

\begin{document}
\newtheorem*{thm}{Theorem}

\title{Abelian maps, brace blocks, and solutions to the {Y}ang-{B}axter equation}
\author{Alan Koch}
\address{Department of Mathematics, Agnes Scott College, 141 E. College Ave., Decatur, GA\ 30030 USA \\akoch@agnesscott.edu}
\date{\today       }

\begin{abstract} Let $G$ be a finite nonabelian group. We show how an endomorphism of $G$ with abelian image gives rise to a family of binary operations $\{\circ_n: n\in \mathbb Z^{\ge 0}\}$ on $G$ such that $(G,\circ_m,\circ_n)$ is a skew left brace for all $m,n\ge 0$. A brace block gives rise to a number of non-degenerate set-theoretic solutions to the Yang-Baxter equation. We give examples showing that the number of solutions obtained can be arbitrarily large.
	\end{abstract}

\maketitle

\section{Introduction}
Rump \cite{Rump07} introduced the notion of {\it braces} to construct involutive, non-degenerate set-theoretic solutions to the {\it Yang-Baxter equation}, an equation whose solutions have numerous, well-known applications (see, e.g., \cite{Chen12,Jimbo89a, Jimbo89b, Jimbo94, LambeRadford97}). This construction was then generalized to {\it skew left braces} in \cite{GuarnieriVendramin17} to provide solutions which are not necessarily involutive. In \cite{Koch20} we show how one can construct many examples of skew left braces using what we call {\it abelian maps}, i.e., endomorphisms of a finite group $G=(G,\cdot)$ with abelian image; our description also allows us to easily generate a pair of set-theoretic solutions to the YBE if $G$ is nonabelian.

Here, we significantly improve upon our abelian map construction. Namely, we prove that a single abelian map $\psi: G\to G$ gives a family of skew left braces; we call this family a {\it brace block}. A brace block consists of $G$ together with a collection $\{\circ_n\}$ of binary operations such that $(G,\circ_m,\circ_n)$ is a skew left brace for all $m,n$. 

The skew left braces we construct are examples of {\it bi-skew braces}.  In \cite{Childs19} Childs introduces bi-skew braces: a bi-skew brace is a skew left brace which is also a skew left brace if the operations are reversed. The skew left braces in our block are evidently bi-skew because both $\Bee{m}{n}$ and $\Bee{n}{m}$ are skew left braces for all $m,n$. We will tend to view our objects as skew left as opposed to bi-skew with the goal of simplifying the statements of our results. 

Starting with an abelian map $\psi:G\to G$ we generate a family of abelian maps $\{\psi_n: n\in\mathbb Z^{\ge 0}\}$ on $G$. We then use \cite[Th. 1]{Koch20} to construct the group $\G{n}$ and produce the skew left brace $(G,\cdot,\circ_n)$. It turns out that $\Bee{m}{n}$ is a skew left brace for every choice of $m,n\ge 0$, hence we get a brace block. Each skew left brace $\Bee{m}{n}$ provides a non-degenerate set-theoretic solution $R$ to the YBE; if $\G{m}$ is abelian this solution is involutive, otherwise we obtain a second solution $R'$, inverse in the sense that $RR'=R'R=\id$. 

The paper is structured as follows. After providing background definitions and setting notation, we construct our abelian maps $\psi_n$ for $n\ge 0$ and give their properties. We show how $\psi_n$ gives us a binary operation $\circ_n$ on $G$. We then introduce the concept of brace blocks and arrive at main result, Theorem \ref{main}, which proves that $(G,\circ_m,\circ_n)$ is a skew left brace for each $m,n\ge 0$, and hence the set of all such braces forms a brace block. We then explicitly construct a pair of inverse solutions to the Yang-Baxter equation from each $\Bee{m}{n}$; as stated previously, the two solutions will coincide if and only if $\G{m}$ is abelian. The solutions coming from $\Bee{m}{n}$ will be entirely in terms of $\psi_m$, $\psi_n$, and the underlying group operation on $G$.  We then do some first examples of brace blocks, focusing on cases where $\psi:G\to G$ is fixed point free in the sense of \cite{Childs13}; these examples tend to produce a small number of solutions to the YBE. Finally, we examine a set of examples which allow us to produce large numbers of solutions; in fact, we will see that there is no bound to the number of solutions obtainable using abelian maps.

Throughout, $G$ is a nonabelian group: while the constructions to follow will work if $G$ is abelian, the brace block will produce only one brace, namely the trivial brace on $G$, making the block construction unnecessary. Since we will put multiple group structures on $G$, we will denote center of $G$ with respect to the operation $\ast$ by $Z(G,\ast)$ and write $Z(G)=Z(G,\cdot)$.

\section{Braces braces and the {Y}ang-{B}axter equation}
Here we shall give a short overview of the connection between different types of braces and the Yang-Baxter equation. Recall that a {\it set-theoretic solution to the Yang-Baxter equation} consists of a set $B$ together with a function $R:B\times B \to B\times B$ such that
\[(R\times \id)(\id \times R)(R\times \id) = (\id \times R)(R\times \id)(\id \times R):B\times B\times B \to B\times B\times B.\]
When $B$ is understood we simply denote the solution by $R$.
Given a solution $R$, write $R(x,y)=(f_y(x),g_x(y))$. We say $R$ is {\it non-degenerate} if both $f_y:B\to B$ and $g_x:B\to B$ are invertible. Additionally, $R$ is {\it involutive} if $R^2=\id$. An example of a non-degenerate, involutive solution is constructed by taking $B$ to be any set and defining $R(x,y)=(y,x)$ for all $x,y\in B$.

Set-theoretic solutions were suggested by Drinfeld \cite{Drinfeld92} as a way to obtain solutions to the Yang-Baxter equation on a vector space $V$, i.e., endomorphisms $V\otimes V\to V\otimes V$ satisfying a ``twisting'' condition analogous to the one above. Given a set-theoretic solution with underlying set $B$, the corresponding vector space solution arises by letting $V$ be the vector space with basis $B$. With this is mind, we will say that two (set-theoretic) solutions are {\it equivalent} if they induce the same vector space solution up to a choice of basis.

Herein, we will use ``solution'' to mean a non-degenerate, set-theoretic solution. In general, our solutions will not be involutive.

Skew left braces are a useful tool for finding solutions to the Yang-Baxter equation. While there have been numerous papers on skew left braces recently, we feel a quick definition is in order to set notation. A {\it skew left brace} is a triple $(B,\cdot,\circ)$ consisting of a set and two binary operations such that $(B,\cdot)$ and $(B,\circ)$ are groups and
\[
	x\circ(y\cdot z) = (x\circ y)\cdot x^{-1}\cdot (x\circ z)\\
\] 
 holds for all $x,y,z\in B$, where $x^{-1}\cdot x = 1_B$, the identity common to both operations. We refer to the condition above as the {\it brace relation}. We will follow the typical (but not universal) practice of writing $x\cdot y$ as $xy$ when no confusion will arise, and $\overline x$ for the inverse of $x$ in $(G,\circ)$. 
 
 A very simple example of a brace is $(G,\cdot,\cdot)$ where $(G,\cdot)$ is any group. We call this the {\it trivial brace on $G$}, or just the {\it trivial brace} for short.
 
 Skew left braces are generalizations of {\it left braces}, found in \cite{Rump07} where $(B,\cdot)$ is assumed to be abelian. For simplicity, we will refer to a skew left brace simply as a brace. 
 
Every brace $(B,\cdot,\circ)$ gives a solution to the Yang-Baxter equation \cite[Th. 3.1]{GuarnieriVendramin17}, namely
\begin{equation} R(x,y)=(x^{-1}(x\circ y),\overline{x^{-1}(x\circ y)}\circ x \circ y)\label{one},\;x,y\in B.\end{equation}
There is also the notion of an {\it opposite brace}, considered independently by the author and Truman in \cite{KochTruman20} and Rump in \cite{Rump19}. The opposite brace to $(B,\cdot,\circ)$, which is simply $(B,\cdot',\circ)$ with $x\cdot'y=y\cdot x$, provides an additional solution to the YBE:
\begin{equation}
	R'(x,y)=((x\circ y)x^{-1},\overline{(x\circ y)x^{-1}}\circ x \circ y)\label{two}.\\
\end{equation}
Note that $R=R'$ if and only if $(B,\cdot)$ is abelian. By \cite[Th. 4.1]{KochTruman20} we have that $RR'=R'R=\id$. 

We say that two braces $(B,\cdot,\circ)$ and $(B',\cdot',\circ')$ are {\it isomorphic} if there exists a function $B\to B$ which is an isomorphism $(B,\cdot)\to (B',\cdot')$ and also an isomorphism $(B,\circ)\to (B',\circ')$. Isomorphic braces will give equivalent solutions to the Yang-Baxter equation. 

\section{Endomorphisms from an abelian map}

The motivation for the construction to follow can be found in \cite{Koch20}, where an abelian map $\psi:(G,\cdot)\to (G,\cdot)$ gives rise to a brace $(G,\cdot,\circ)$. It is not hard to show that $\psi$ respects $\circ$ as well, and that $\psi(G)$ is an abelian subgroup of $(G,\circ)$. Viewing $\psi:(G,\circ)\to(G,\circ)$ then allows for the construction of another brace, say $(G,\circ,\diamond)$. This can then be repeated {it ad infinitum}, of course, generating a ``chain'' of braces. It turns out that any two operations generated in this manner can be put together in a brace as well, e.g., $(G,\cdot,\diamond)$. The simplest way to describe this chain of operations is through the abelian maps $\psi_n$ introduced below. 

Let $G$ be a group, written multiplicatively, and write $\Map(G)$ for the set of all functions $\phi:G\to G$. We write $1\in\Map(G)$ to denote the identity map and $0\in\Map(G)$ for the trivial map. For $\phi,\psi\in\Map(G)$ define 
\begin{align*}
	(\phi+\psi)(g)&=\phi(g)\psi(g)\\
	-\phi(g) &= \phi(g^{-1})\\
	\phi\psi(g) &= \phi(\psi(g)).
\end{align*}
Then $\Map(G)$ carries the structure of a right near-ring: $(\Map(G),+)$ is a group (necessarily nonabelian), the multiplication is associative and the right distributive law holds. We define $\phi^n$ in the usual way for $n\ge 0$.

We are interested primarily in the subset $\End(G)\subset \Map(G)$ consisting of endomorphisms of $G$, particularly the set $\Ab(G)\subset \End(G)$ of maps $\psi:G\to G$ with $\psi(G)$ abelian: we call such endomorphisms {\it abelian maps}. A crucial fact about abelian maps is that they are constant on conjugacy classes: we will see how this observation greatly simplifies some calculations.

Neither $\Ab(G)$ nor $\End(G)$ are, in general, closed under the group operation, though both contain the identity and $\Ab(G)$ contains its inverses.  Notice that if $\psi\in\Ab(G)$ then $\psi\phi\in\Ab(G)$ for all $\phi\in\End(G)$; in particular,  $\psi^n\in\Ab(G)$ for all $n\ge 0$.

We shall now introduce a collection of abelian maps that can be constructed from a single abelian map. Let $\psi\in\Ab(G)$. For $n\ge 0$, define
\[\psi_n = - (1-\psi)^n+1.\]
Note that $\psi_0=0$ and $\psi_1=\psi$. While the binomial formula does not work in $\Map(G)$ generally, we do have the following useful result. 
\begin{Lemma} Let $\psi\in\Ab(G)$. For $n\ge 1$ we have 
	\[\psi_n=\sum_{i=1}^n (-1)^{i-1}\binom ni \psi^{i} ,\] i.e.,
\begin{equation}\label{form}\psi_n(g) =\prod_{i=1}^{n}\psi^i\left(g^{(-1)^i\binom {n}i}\right)=\psi\left(g^{\binom n1}\right)\psi^2\left(g^{-\binom n2}\right)\cdots\psi^n\left(g^{(-1)^n\binom nn}\right),\;g\in G. \end{equation}
Thus, $\psi_n\in\Ab(G)$ for all $n\ge 0$.
\end{Lemma}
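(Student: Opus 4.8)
The plan is to prove the additive identity $\psi_n=\sum_{i=1}^n(-1)^{i-1}\binom ni\psi^i$ first; the pointwise product formula \eqref{form} is then just this identity evaluated at $g$, and membership in $\Ab(G)$ drops out at the end. The governing difficulty is that $(\Map(G),+)$ is nonabelian and only right distributive, so the binomial theorem is unavailable for expanding $(1-\psi)^n$. The feature that rescues the computation is that for $i\ge 1$ the image $\psi^i(G)$ lies in the abelian group $\psi(G)$; consequently any two maps with image in $\psi(G)$ commute under $+$, and the \emph{only} summand that will fail to commute with the rest is the identity map $1$. At the outset I would record that the additive inverse is $(-\phi)(g)=\phi(g)^{-1}$, agreeing with $\phi(g^{-1})$ whenever $\phi$ is an endomorphism, that right distributivity $(\phi+\chi)\theta=\phi\theta+\chi\theta$ always holds, and that left distributivity $\beta(\phi+\chi)=\beta\phi+\beta\chi$ holds when $\beta\in\End(G)$.

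The heart of the argument is an induction on $n$ establishing the auxiliary claim
\[(1-\psi)^n = 1 + S_n, \qquad S_n:=\sum_{i=1}^n(-1)^i\binom ni\psi^i,\]
in which the lone identity map is kept on the far left while every remaining term has image in $\psi(G)$. The base case $n=1$ is immediate. For the step I would write $(1-\psi)^{n+1}=(1+S_n)(1-\psi)$ and use right distributivity to split this as $(1-\psi)+S_n(1-\psi)=1+\bigl[(-\psi)+S_n(1-\psi)\bigr]$. Each summand of $S_n$ has the form $c\,\psi^i$ with $c\in\mathbb Z$; since its image lies in the abelian group $\psi(G)$ it is itself an endomorphism, so left distributivity applies and yields $(c\,\psi^i)(1-\psi)=c\,\psi^i-c\,\psi^{i+1}$ via $\psi^i(-\psi)=-\psi^{i+1}$. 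Every term so produced lands in $\psi(G)$ and hence commutes, so I may freely gather the coefficient of each $\psi^j$ inside the bracket; the resulting bookkeeping is exactly Pascal's rule $\binom nj+\binom n{j-1}=\binom{n+1}j$, and the leading $1$ is untouched, giving $(1-\psi)^{n+1}=1+S_{n+1}$.

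Granting the claim, I would finish by computing $\psi_n=-(1-\psi)^n+1=-(1+S_n)+1$. Inverting $1+S_n$ reverses the order to $-S_n+(-1)$, and the trailing $+1$ then cancels the $-1$ (the identity map's inverse), leaving $\psi_n=-S_n$; because the terms of $S_n$ commute, negation flips each sign and produces $\psi_n=\sum_{i=1}^n(-1)^{i-1}\binom ni\psi^i$, which is precisely \eqref{form} written multiplicatively, the order of factors being immaterial since they all lie in $\psi(G)$. Finally $\psi_n$ is a finite $+$-combination of the endomorphisms $\psi^i$, all with image in the abelian group $\psi(G)$; such a combination is again an endomorphism whose image lies in $\psi(G)$, so $\psi_n\in\Ab(G)$, while $\psi_0=0\in\Ab(G)$ trivially. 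The one genuinely delicate point throughout is keeping the identity map segregated on the left during the induction: it is the sole obstruction to commutativity, and once it is cancelled by the $+1$ in the definition of $\psi_n$ the whole problem collapses to the commutative binomial bookkeeping the statement predicts.
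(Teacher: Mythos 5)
Your proposal is correct and is essentially the paper's own argument: both prove, by induction on $n$ using right distributivity, commutativity of elements of $\psi(G)$, and Pascal's rule, that $(1-\psi)^n = 1+\sum_{i=1}^n(-1)^i\binom{n}{i}\psi^i$ (the paper states this pointwise, keeping $g$ on the far left exactly as you keep the map $1$ on the far left), and then cancel that leading term against the $+1$ in the definition of $\psi_n$ before reading off membership in $\Ab(G)$. The only variations are organizational — you compose with $(1-\psi)$ on the right and invoke left distributivity of the endomorphism summands $c\,\psi^i$, whereas the paper composes on the left and pushes $\psi$ through the product using that it is a homomorphism — and your explicit remark that $-\phi$ must be read as pointwise inversion (agreeing with $\phi(g^{-1})$ only when $\phi$ is an endomorphism, which $(1-\psi)^n$ need not be) tidies a point the paper leaves implicit.
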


\begin{proof}

	First, we will prove that  
\begin{equation}\label{bin}(1-\psi)^n(g)=g\prod_{i=1}^{n}\psi^i\left(g^{(-1)^i\binom ni}\right)\end{equation}
holds for all $g\in G,\;n\ge 0$ by induction on $n$. It is clear that Equation \ref{bin} holds for $n=0$ as both sides reduce to $g$. Now assume Equation \ref{bin} holds for $n=k$. Then, since $\Map(G)$ has a right distributive law and $\psi$ is abelian,
\begin{align*}
	(1-\psi)^{k+1}(g)&=(1-\psi)(1-\psi)^{k}(g)\\
	&=(1-\psi)^{k}(g)-\psi((1-\psi)^{k}(g))\\
	&=g\prod_{i=1}^{k}\psi^i\left(g^{(-1)^i\binom {k}i}\right)\psi\left(g\prod_{i=1}^{k}\psi^i(g^{(-1)^i\binom {k}i})\right)^{-1}\\
	&=g\prod_{i=1}^{k}\psi^i\left(g^{(-1)^i\binom {k}i}\right)\cdot\left(\prod_{i=1}^{k}\psi^{i+1}(g^{(-1)^{i+1}\binom {k}i})\right)\psi(g^{-1})\\
	&=g\prod_{i=1}^{k}\psi^i\left(g^{(-1)^i\binom {k}i}\right)\cdot\left(\psi(g^{-1})\prod_{i=2}^{k+1}\psi^{i}(g^{(-1)^{i}\binom {k}{i-1}})\right)\\
	&=g\prod_{i=1}^{k}\psi^i\left(g^{(-1)^i\binom {k}i}\right)\cdot\left(\prod_{i=1}^{k+1}\psi^{i}(g^{(-1)^{i}\binom {k}{i-1}})\right)\\
	&=\left(\prod_{i=1}^{k}\psi^i\left(g^{(-1)^i\binom {k}i+(-1)^{i}\binom {k}{i-1}}\right)\right)\psi^{k+1}(g^{(-1)^{k+1}\binom {k+1}{k+1}})\\
	&=\prod_{i=1}^{k+1}\psi^i\left(g^{(-1)^i\binom {k+1}i}\right)
\end{align*}
and Equation \ref{bin} is true for all $n$.

Next,\[
	\psi_n(g)=(-(1-\psi)^n+1)(g)  = \left(g\prod_{i=1}^{n}\psi^i\left(g^{(-1)^i\binom {n}i}\right)\right)^{-1}g
=\prod_{i=1}^{n}\psi^i\left(g^{(-1)^i\binom {n}i}\right),\]
and so Equation \ref{form} holds. That $\psi_n$ is an endomorphism follows from the fact that each of $\psi^i$ are endomorphisms and that \[\psi^i(g^k)\psi^j(h^{\ell})=\psi(\psi^{i-1}(g^k)\psi^{j-1}(h^{\ell}))=\psi(\psi^{j-1}(h^{\ell})\psi^{i-1}(g^k))=\psi^j(h^{\ell})\psi^i(g^k)\]
for all $i,j,k,\ell$ with $i,j\ge 1$. Finally, $\psi_n(g)$ is a product of elements in $\psi(G)$, hence $\psi_n$ is abelian for $n\ge 1$, and since $\psi_0$ is trivial we get $\psi_0\in\Ab(G)$ and the lemma is proved.
\end{proof}

For future reference we make note of some useful properties.

\begin{Lemma}\label{lem} For $\psi\in\Ab(G)$ we have, for all $m,n\ge 0$,
	\begin{enumerate}
		\item \label{ab} $\psi_n(G)\le\psi(G)$
		\item \label{comm} $\psi_m(g)\psi_n(h)=\psi_n(h)\psi_m(g)$
		\item \label{sum}  $\psi_m+\psi_n=\psi_n+\psi_m$
		\item \label{comp} $(1-\psi)^n = 1-\psi_n$
		\item \label{recur} $\psi_{n+1}=\psi+\psi_n(1-\psi)$.
		\item \label{paren} $(\psi_m)_n = \psi_{mn}$.
	\end{enumerate}
\end{Lemma}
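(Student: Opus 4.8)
The plan is to prove the six parts in essentially the listed order, since the later identities lean on the earlier ones and on the explicit formula \eqref{form} from the preceding lemma. Everything happens inside the right near-ring $\Map(G)$, and the two facts I must constantly respect are that its addition is non-abelian and that only the right distributive law is available. The recurring device that keeps the bookkeeping honest is that every map in sight has image inside the \emph{abelian} subgroup $\psi(G)$, so any two of them commute additively and I may reorder sums freely.

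For part \ref{ab} I read off from \eqref{form} that $\psi_n(g)$ is a product of terms $\psi^i(\cdots)$ with $i\ge 1$, each of which lies in $\psi(G)$ since $\psi^i(x)=\psi(\psi^{i-1}(x))$. As $\psi(G)$ is a subgroup the whole product lies in it, and since $\psi_n$ is an endomorphism its image $\psi_n(G)$ is a subgroup; hence $\psi_n(G)\le\psi(G)$ (with $n=0$ trivial). Parts \ref{comm} and \ref{sum} are then immediate: by \ref{ab} the elements $\psi_m(g)$ and $\psi_n(h)$ both lie in the abelian group $\psi(G)$, so they commute, which is \ref{comm}; evaluating $\psi_m+\psi_n$ and $\psi_n+\psi_m$ at a common argument and invoking \ref{comm} gives \ref{sum}.

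Part \ref{comp} is pure rearrangement of the defining relation $\psi_n=-(1-\psi)^n+1$ inside the group $(\Map(G),+)$: adding $-1$ on the right gives $\psi_n-1=-(1-\psi)^n$, and negating with $-(a+b)=-b-a$ and $-(-x)=x$ yields $(1-\psi)^n=1-\psi_n$. With \ref{comp} in hand part \ref{paren} is almost free: since $\psi_m\in\Ab(G)$ the symbol $(\psi_m)_n$ is defined and equals $-(1-\psi_m)^n+1$; substituting $1-\psi_m=(1-\psi)^m$ from \ref{comp} and using associativity of composition gives $(1-\psi_m)^n=((1-\psi)^m)^n=(1-\psi)^{mn}$, whence $(\psi_m)_n=-(1-\psi)^{mn}+1=\psi_{mn}$.

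The genuinely computational step is \ref{recur}, and it is where the near-ring subtleties bite, so I expect it to be the main obstacle. I would expand $\psi_n$ via \eqref{form} as $\sum_{i\ge1}(-1)^{i-1}\binom ni\psi^i$ and compute $\psi_n(1-\psi)$ by right-distributing the outer sum over the composition, reducing to the single-endomorphism identity $\psi^i(1-\psi)=\psi^i-\psi^{i+1}$, which is valid precisely because $\psi^i$ is a homomorphism. Adding the explicit $\psi$, reindexing the shifted sum, and collapsing with Pascal's rule $\binom ni+\binom n{i-1}=\binom{n+1}i$ then produces exactly $\sum_{i\ge1}(-1)^{i-1}\binom{n+1}i\psi^i=\psi_{n+1}$, the leftover $\psi$ supplying the missing $\binom n0$ term at $i=1$. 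Alternatively one can derive \ref{recur} straight from \ref{comp} by writing $(1-\psi)^{n+1}=(1-\psi)^n(1-\psi)=(1-\psi_n)(1-\psi)$ and right-distributing; but either route forces me to combine the additive and multiplicative structures, distribute only on the correct side, and reorder non-commuting summands at the end, so the commutativity afforded by \ref{ab} is indispensable.
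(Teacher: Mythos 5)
Your proposal is correct, and for parts (\ref{ab})--(\ref{comp}) and (\ref{paren}) it is exactly the paper's argument: (\ref{ab})--(\ref{sum}) read off from the explicit formula \eqref{form} plus abelianness of $\psi(G)$, (\ref{comp}) by rearranging the defining relation in the (nonabelian) group $(\Map(G),+)$ -- and you correctly handle the reversal $-(a+b)=-b-a$ -- and (\ref{paren}) by substituting $1-\psi_m=(1-\psi)^m$ and using associativity of composition. The only genuine divergence is (\ref{recur}): the paper derives it from (\ref{comp}) by writing $(1-\psi)^{n+1}=(1-\psi_n)(1-\psi)$ and unwinding the evaluation at a point $g$, commuting terms via (\ref{comm}) at the end; your primary route instead right-distributes $\psi_n(1-\psi)$ term by term using \eqref{form}, reduces to $\psi^i(1-\psi)=\psi^i-\psi^{i+1}$ (valid since $\psi^i$ is a homomorphism), and collapses with Pascal's rule. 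Both are sound: your route essentially re-runs, in reverse, the binomial induction already carried out in the proof of \eqref{form}, so it trades the paper's short pointwise computation for a reindexing argument whose rearrangements must be (and are, by (\ref{comm})) justified by the fact that all summands take values in the abelian subgroup $\psi(G)$; the paper's route is shorter precisely because (\ref{comp}) lets it avoid touching binomial coefficients at all. Since you also sketch the (\ref{comp})-based route as an alternative, nothing is missing.
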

\begin{proof} 
First, (\ref{ab}) should be clear from Equation \ref{bin}, as is (\ref{comm}), and (\ref{sum}) is a special case of  (\ref{comm}). The formulation in (\ref{comp}) follows quickly from the definition of $\psi_n$. To show the recursive formula in (\ref{recur}), we have
	\begin{align*}
		\psi_{n+1}(g) &= (-(1-\psi)^{n+1}(g))g\\
		&=-((1-\psi_n)(1-\psi)(g))g\tag{by (\ref{comp})}\\
		&=\left((1-\psi_n)(g\psi(g^{-1})\right)^{-1}g\\
		&=\left(g\psi(g^{-1})\psi_n(g\psi(g^{-1})^{-1})\right)^{-1}g\\
		&=\psi_n(g\psi(g^{-1}))\psi(g)g^{-1}g\\
		&=\psi_n(1-\psi)+\psi\\
		&=\psi+\psi_n(1-\psi) \tag{by (\ref{comm})}.
	\end{align*} 
Finally, (\ref{paren}) follows from a quick computation:
\[(\psi_m)_n = -(1-\psi_m)^n + 1 = -((1-\psi)^m)^n+1 = (1-\psi)^{mn}+1=\psi_{mn}.\]
\end{proof}

\begin{remark}
	In light of the introduction to this section, Lemma \ref{lem} (\ref{paren}) should be reasonable. For example, the sixth map starting with $\psi$ should be the same as the third map starting with $\psi_2$ or the second map starting with $\psi_3$. 
\end{remark}

\section{Brace blocks and solutions to the {Y}ang-{B}axter equation}

Here we will introduce our main object of study: the brace block. We will show that $\psi\in\Ab(G)$ can be used to generate a brace block, and we will provide explicit solutions to the Yang-Baxter equation that are obtained from $\psi$.

\begin{definition}
	A {\it brace block} is a set $B$ together with  binary operations $\{\circ_n : n\ge 0\}$ such that $B_{m,n}:=(B,\circ_m,\circ_n)$ is a brace for all $m,n\ge 0$.
\end{definition}

Note that $B_{n,n}$ is the trivial brace on $(B,\circ_n)$. 

As mentioned in the introduction, the braces in a brace block are necessarily bi-skew, a concept developed by Childs in \cite{Childs19}. A {\it bi-skew brace} is a set $B$ with binary operations $\cdot$ and $\circ$ such that both $(B,\cdot,\circ)$ and $(B,\circ,\cdot)$ are (skew left) braces. Given a brace block, both $(B,\circ_m,\circ_n)$ and $(B,\circ_n,\circ_m)$ are braces, hence $(B,\circ_m,\circ_n)$ is bi-skew.

\begin{example}
	Any brace $(B,\cdot,\circ)$ can be made into a brace block with $x\circ_0 y= x\cdot y,\;x\circ_n y = x\circ y$ for all $x,y\in B, \;n\ge 1$. Since each brace is the trivial brace on $(B,\cdot)$ we will call this a {\it trivial brace block}.
\end{example} 

\begin{example}
	Suppose $(B,\cdot,\circ)$ is a nontrivial bi-skew brace. We can then form a nontrivial brace block with
	\[x\circ_n y = \begin{cases} x\cdot y & n \text{ even} \\ x\circ y & n \text{ odd} \end{cases}.\]
\end{example}

Starting with a (nonabelian) group $G=(G,\cdot)$ and a $\psi\in\Ab(G)$ we will construct a brace block. We start by constructing the necessary binary operations. Let $\psi\in\Ab(G)$. For each $n\ge 0$ define a binary operation $\circ_n$ on $G$  by
\[g\circ_{n} h = g\psi_n(g^{-1})h\psi_n(g),\;g,h\in G.\]
Notice that $\circ_0$ is the usual group operation in $G$, and $\circ_1$ is the operation denoted by $\circ$ in \cite{Koch20} since $\psi_1=\psi$. We will frequently write $\cdot$ and $\circ$ for $\circ_0$ and $\circ_1$ respectively. Since $\psi_n\in\Ab(G)$ we have, by \cite[Th. 1]{Koch20},

\begin{Lemma}
	For all $n\ge 0$, $(G,\circ_n)$ is a group.
\end{Lemma}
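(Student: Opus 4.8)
The plan is to observe that $\circ_n$ is precisely the operation produced by \cite[Th. 1]{Koch20} applied to the endomorphism $\psi_n$ in place of $\psi$ itself. Since the preceding Lemma establishes that $\psi_n\in\Ab(G)$ for every $n\ge 0$, the cited theorem applies verbatim and immediately yields that $(G,\circ_n)$ is a group. In other words, the entire content of the statement is that the group-building construction of \cite{Koch20} depends only on its input being an abelian map, and the Lemma has already secured exactly that hypothesis.

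For a self-contained argument I would instead verify the group axioms directly, the crucial input throughout being that $\psi_n(G)$ is abelian. First I would check the identity: since $\psi_n$ is an endomorphism we have $\psi_n(1)=1$, so $1\circ_n h=\psi_n(1)h\psi_n(1)=h$ and $g\circ_n 1=g\psi_n(g^{-1})\psi_n(g)=g\psi_n(g^{-1}g)=g$, exhibiting $1$ as a two-sided identity for $\circ_n$.

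The main work is associativity, and the key preliminary is the identity $\psi_n(g\circ_n h)=\psi_n(g)\psi_n(h)$. Applying $\psi_n$ to $g\psi_n(g^{-1})h\psi_n(g)$ gives $\psi_n(g)\,\psi_n^2(g^{-1})\,\psi_n(h)\,\psi_n^2(g)$, and because $\psi_n(G)$ is abelian the two $\psi_n^2$-terms commute past $\psi_n(h)$ and cancel, leaving $\psi_n(g)\psi_n(h)$. With this in hand I would introduce $L\colon G\to\Perm(G)$ defined by $L_g(h)=g\psi_n(g^{-1})h\psi_n(g)$, noting that each $L_g$ is a bijection (a composite of left and right translations). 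Using $\psi_n(g\circ_n g')=\psi_n(g)\psi_n(g')$ together with commutativity of the image, one computes $L_g\circ L_{g'}=L_{g\circ_n g'}$ and $L_1=\id$. Since $L_g(1)=g$, the assignment $g\mapsto L_g$ is injective, so $\{L_g:g\in G\}$ is a subset of $\Perm(G)$ that contains $\id$ and is closed under composition; as $G$ is finite this forces it to be a subgroup, and transporting its structure along the bijection $g\mapsto L_g$ exhibits $(G,\circ_n)$ as a group with identity $1$.

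The hard part will be the associativity step, and the embedding $L$ is what tames it: rather than expanding $(g\circ_n h)\circ_n k$ and $g\circ_n(h\circ_n k)$ by brute force, where one must track several nested applications of $\psi_n$, the relation $L_g\circ L_{g'}=L_{g\circ_n g'}$ reduces associativity of $\circ_n$ to associativity of composition in $\Perm(G)$. Every cancellation in that computation is an instance of the single fact that $\psi_n(G)$ is abelian, which is exactly why the Lemma's conclusion $\psi_n\in\Ab(G)$ is the only hypothesis one needs.
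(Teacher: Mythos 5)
Your first paragraph is exactly the paper's proof: the paper derives this lemma by citing \cite[Th. 1]{Koch20}, which applies verbatim because the preceding lemma shows $\psi_n\in\Ab(G)$; so on that level you have matched the intended argument. Your additional self-contained verification is correct and is a genuinely different (and more informative) route: the identity $\psi_n(g\circ_n h)=\psi_n(g)\psi_n(h)$ does follow from commutativity of $\psi_n(G)$ exactly as you say, the maps $L_g(h)=g\psi_n(g^{-1})h\psi_n(g)$ do satisfy $L_gL_{g'}=L_{g\circ_n g'}$ and $L_1=\id$, and injectivity via $L_g(1)=g$ transports the group structure back to $(G,\circ_n)$. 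Two small remarks. First, your appeal to finiteness (a nonempty finite subset of $\Perm(G)$ closed under composition is a subgroup) is legitimate here since the paper works with finite $G$, but it is avoidable: one can exhibit the inverse directly, since
\begin{equation*}
L_g\bigl(\psi_n(g)g^{-1}\psi_n(g^{-1})\bigr)=g\psi_n(g^{-1})\psi_n(g)g^{-1}\psi_n(g^{-1})\psi_n(g)=1_G,
\end{equation*}
which is precisely the formula for the $\circ_n$-inverse that the paper records immediately after the lemma; this makes the argument work for infinite $G$ as well. Second, what your approach buys over the paper's is transparency about which hypotheses matter: the citation hides the mechanism, whereas your computation isolates the two facts actually used, namely that $\psi_n$ is an endomorphism of $(G,\cdot)$ and that its image is abelian.
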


For future reference, the identity in $\G{n}$ is $1_G$, the identity in $(G,\cdot)$, and the inverse to $g\in(G,\circ_n)$ is $\psi_n(g)g^{-1}\psi_n(g^{-1})$, as can be readily checked.

We also have the following recursive formulation.

\begin{proposition}
	Let $\psi\in\Ab(G)$. Then for all $n\ge 0$ we have
	\[g\circ_{n+1} h = ((g\psi(g^{-1}))\circ_{n} h)\psi(g), \;g,h\in G.\]
\end{proposition}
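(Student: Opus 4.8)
The plan is to reduce the claim to the recursion for the maps $\psi_n$ recorded in Lemma \ref{lem}. Written pointwise, part (\ref{recur}) of that lemma, $\psi_{n+1}=\psi+\psi_n(1-\psi)$, asserts that
\[
\psi_{n+1}(x)=\psi(x)\,\psi_n\!\big(x\psi(x^{-1})\big)\qquad\text{for all }x\in G,
\]
since $(1-\psi)(x)=x\psi(x^{-1})$ and addition in $\Map(G)$ is pointwise multiplication. I would record this at $x=g$ and at $x=g^{-1}$, namely $\psi_{n+1}(g)=\psi(g)\psi_n(g\psi(g^{-1}))$ and $\psi_{n+1}(g^{-1})=\psi(g^{-1})\psi_n(g^{-1}\psi(g))$; these are the only facts about $\psi_{n+1}$ the proof uses.

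Next I would set $a:=g\psi(g^{-1})$, so that $a^{-1}=\psi(g)g^{-1}$, and expand the right-hand side straight from the definition of $\circ_n$:
\[
\big((g\psi(g^{-1}))\circ_n h\big)\psi(g)=a\,\psi_n(a^{-1})\,h\,\psi_n(a)\,\psi(g).
\]
The crux is a single commutation. By Lemma \ref{lem}(\ref{comm}) (taking $m=1$) we have $\psi(g)\psi_n(a)=\psi_n(a)\psi(g)$, so the trailing factors $\psi_n(a)\,\psi(g)$ may be swapped, turning the right-hand side into $a\,\psi_n(a^{-1})\,h\,\psi(g)\,\psi_n(a)$. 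It then suffices to match the two coefficients flanking $h$ with $g\,\psi_{n+1}(g^{-1})$ and $\psi_{n+1}(g)$, for then the expression becomes exactly $g\,\psi_{n+1}(g^{-1})\,h\,\psi_{n+1}(g)=g\circ_{n+1}h$.

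The right coefficient is immediate: $\psi(g)\psi_n(a)=\psi(g)\psi_n(g\psi(g^{-1}))=\psi_{n+1}(g)$ by the recursion at $x=g$. For the left coefficient I would first use that $\psi_n$ is an abelian map, so that $\psi_n(a^{-1})=\psi_n(\psi(g)g^{-1})=\psi_n(g^{-1}\psi(g))$; then $a\,\psi_n(a^{-1})=g\cdot\psi(g^{-1})\psi_n(g^{-1}\psi(g))=g\,\psi_{n+1}(g^{-1})$ by the recursion at $x=g^{-1}$. Combining the two identifications completes the computation.

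I do not expect a genuine obstacle here: the argument is purely formal once the recursion of Lemma \ref{lem}(\ref{recur}) is written pointwise. The only substantive input is the commutation $\psi(g)\psi_n(a)=\psi_n(a)\psi(g)$ of Lemma \ref{lem}(\ref{comm}), which is precisely what lets $\psi(g)$ slide past $\psi_n(a)$ and reassemble into $\psi_{n+1}(g)$; everything else is unwinding the definition of $\circ_n$ and a harmless reordering of the argument inside the abelian map $\psi_n$.
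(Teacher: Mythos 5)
Your proof is correct and follows essentially the same route as the paper's: both arguments amount to expanding one side via the definition of $\circ_n$, applying the pointwise form of Lemma \ref{lem}(\ref{recur}) at $g$ and $g^{-1}$, and using the commutation from Lemma \ref{lem}(\ref{comm}) to reassemble $\psi_{n+1}$. The only cosmetic difference is direction --- you expand the right-hand side and build up to $g\circ_{n+1}h$, while the paper expands $g\circ_{n+1}h$ and simplifies down to $((g\psi(g^{-1}))\circ_n h)\psi(g)$.
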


\begin{proof}
	Lemma \ref{lem} (\ref{recur}) gives $\psi_{n+1}(g)=\psi(g)\psi_n(1-\psi)(g)$ for all $g\in G$. Applying this we get
	\begin{align*}
	g\circ_{n+1} h &= g\psi_{n+1}(g^{-1})h\psi_{n+1}(g)\\
	&=  g\psi_n(g\psi(g^{-1}))^{-1}\psi(g^{-1}) h\psi(g)\psi_n(g\psi(g^{-1}))\\
	&= g\psi(g^{-1})\psi_n(g\psi(g^{-1}))^{-1} h\psi_n(g\psi(g^{-1}))\psi(g) \tag{Lemma \ref{lem} (\ref{comm}),(\ref{sum})}\\
	&= ((g\psi(g^{-1}))\circ_{n} h)\psi(g).
\end{align*}
\end{proof}

From this we get
\begin{corollary}\label{abdone}
	Let $\psi\in \Ab(G)$. If $(G,\circ_n)$ is abelian, then $\G{n+1}=\G{n}$, hence $\G{m}=\G{n}$ for all $m\ge n$.
\end{corollary}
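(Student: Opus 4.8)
The plan is to prove the single-step statement ``$\G{n}$ abelian $\Rightarrow \circ_{n+1}=\circ_n$'', from which $\G{m}=\G{n}$ for all $m\ge n$ follows by an immediate induction: each equality $\circ_{k+1}=\circ_k$ keeps the group abelian, so the hypothesis propagates upward. The engine for the single step is the recursive formula $g\circ_{n+1}h=((g\psi(g^{-1}))\circ_{n} h)\psi(g)$ from the preceding Proposition, together with one elementary observation about multiplying by elements of $\psi(G)$.

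First I would record that observation: for every $x\in G$ and every $p\in\psi(G)$ one has $x\circ_n p = x\cdot p$. Indeed $x\circ_n p = x\psi_n(x)^{-1}p\,\psi_n(x)$, and since $\psi_n(x)\in\psi(G)$ by Lemma \ref{lem}(\ref{ab}) while $\psi(G)$ is abelian, the element $p$ commutes with $\psi_n(x)$, so the conjugation is trivial. In particular, writing $a=g\psi(g^{-1})$ and $p=\psi(g)\in\psi(G)$, we get $a\circ_n p = a\cdot p = g\psi(g^{-1})\psi(g)=g$, and also $(a\circ_n h)\cdot p=(a\circ_n h)\circ_n p$ for every $h$.

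Now combine. By the recursive formula and the observation, $g\circ_{n+1}h=(a\circ_n h)\cdot p=(a\circ_n h)\circ_n p$, while $g\circ_n h=(a\circ_n p)\circ_n h$. It is exactly here that the hypothesis enters: in the abelian group $\G{n}$ we may use associativity and commutativity of $\circ_n$ to rewrite $(a\circ_n p)\circ_n h=(a\circ_n h)\circ_n p$. Comparing the two expressions gives $g\circ_n h=g\circ_{n+1}h$ for all $g,h$, i.e.\ $\circ_{n+1}=\circ_n$.

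The main obstacle is conceptual rather than computational: the recursion produces $\circ_{n+1}$ with a stray ordinary product $\cdot\,\psi(g)$ appended, and a head-on comparison of the defining formulas $g\psi_{n+1}(g^{-1})h\psi_{n+1}(g)$ versus $g\psi_n(g^{-1})h\psi_n(g)$ is awkward. The key is to notice that this stray factor lies in $\psi(G)$, so it can be reinterpreted as a $\circ_n$-multiplication; only after this reinterpretation does the commutativity of $\G{n}$ become applicable and collapse the two operations.
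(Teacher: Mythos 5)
Your proof is correct and takes essentially the same route as the paper: both arguments hinge on the recursive formula $g\circ_{n+1}h=((g\psi(g^{-1}))\circ_{n} h)\psi(g)$ followed by an application of the commutativity of $\circ_n$, and both then spread the hypothesis upward to get $\G{m}=\G{n}$ for all $m\ge n$. The only difference is how the trailing factor $\psi(g)$ is handled: the paper expands $\circ_n$ into the underlying product and cancels it using Lemma \ref{lem}~(\ref{comm}), whereas you absorb it as a $\circ_n$-multiplication via the observation $x\circ_n p=x\cdot p$ for $p\in\psi(G)$ --- a tidy repackaging of the same facts, since that observation is itself just Lemma \ref{lem}~(\ref{ab}) and (\ref{comm}) in disguise.
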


\begin{proof}
	Suppose $(G,\circ_n)$ is abelian. Then for all $g,h\in G$ we have
	\begin{align*}
		g\circ_{n+1} h &= ((g\psi(g^{-1}))\circ_{n} h)\psi(g)\\
		&=(h\circ_n g\psi(g^{-1}))\psi(g)\\
		&=h\psi_n(h^{-1})g\psi(g^{-1})\psi_n(h)\psi(g)\\
		&=h\psi_n(h^{-1})g\psi_n(h)\tag{Lemma \ref{lem} (\ref{comm})}\\
		&=h\circ_n g\\
		&=g\circ_n h.
	\end{align*}
Thus the group operations are identical, and $\G{n+1}=\G{n}$.
\end{proof}

Having established this collection of groups given by $\psi\in\Ab(G)$ we arrive at our main result, which states that any pair of the groups constructed above form a brace.
\begin{theorem}\label{main}
	Let $\psi\in\Ab(G)$. Then for all $m,n\ge 0$ we have $(G,\circ_m,\circ_n)$ is a brace.
\end{theorem}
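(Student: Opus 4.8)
The plan is to avoid a head-on expansion of the brace relation and instead reduce the theorem to a single endomorphism check, via the left-translation map attached to the pair of operations. Since we already know that $\G{m}$ and $\G{n}$ are groups, the only thing remaining is the brace relation
\[ a\circ_n(b\circ_m c) = (a\circ_n b)\circ_m a^{\ast}\circ_m(a\circ_n c), \]
where $a^{\ast}=\psi_m(a)a^{-1}\psi_m(a^{-1})$ is the $\circ_m$-inverse of $a$ (the ``$\cdot$-inverse'' in the brace axiom). For each $a\in G$ I would set $\lambda_a(b)=a^{\ast}\circ_m(a\circ_n b)$, so that $a\circ_n b=a\circ_m\lambda_a(b)$. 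Substituting this into both sides and cancelling $a^{\ast}\circ_m a=1_G$ and the leading $a$ shows that the brace relation holds if and only if $\lambda_a(b\circ_m c)=\lambda_a(b)\circ_m\lambda_a(c)$ for all $b,c$; that is, it suffices to prove that each $\lambda_a$ is an endomorphism of $\G{m}$. This is the $(m,n)$ analogue of the usual $\lambda$-map criterion of \cite{GuarnieriVendramin17}.

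First I would compute $\lambda_a$ explicitly. Writing $P=\psi_m(a)$ and using $\psi_m(a^{-1})=P^{-1}$, the $\circ_m$-inverse is $a^{\ast}=Pa^{-1}P^{-1}$, and since $\psi_m$ is an endomorphism with image in the abelian group $\psi(G)$ (Lemma \ref{lem}(\ref{ab})) one gets $\psi_m(a^{\ast})=P^{-1}$. Expanding $\lambda_a(b)=a^{\ast}\circ_m(a\circ_n b)$ and repeatedly commuting $\psi$-values past one another — legitimate because all of them lie in the abelian subgroup $\psi(G)$ (Lemma \ref{lem}(\ref{comm})) — the factors of $a$ telescope and one is left with
\[ \lambda_a(b)=Q_a\,b\,Q_a^{-1},\qquad Q_a:=\psi_m(a)\psi_n(a^{-1})\in\psi(G). \]
Thus $\lambda_a$ is nothing more than conjugation, in the original group $(G,\cdot)$, by an element of $\psi(G)$.

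The heart of the argument is then to verify that conjugation by $Q_a$ is an automorphism of $\G{m}$, not merely of $(G,\cdot)$. The key point is that in $b\circ_m c=b\psi_m(b^{-1})c\psi_m(b)$ the twisting factor $\psi_m(b)$ lies in $\psi(G)$, which is centralized by $Q_a\in\psi(G)$; hence $\psi_m(Q_b Q_a^{-1})$-type terms collapse, $\psi_m(Q_a bQ_a^{-1})=\psi_m(b)$, and a short rearrangement yields $\lambda_a(b\circ_m c)=\lambda_a(b)\circ_m\lambda_a(c)$. This gives the brace relation and hence the theorem. If one prefers the full homomorphism formulation $\lambda\colon\G{n}\to\Aut\G{m}$, one checks in addition that $Q_{a\circ_n a'}=Q_aQ_{a'}$; this holds because both $\psi_m$ and $\psi_n$ annihilate the $\circ_n$-twist, so $\psi_k(a\circ_n a')=\psi_k(aa')$ for $k=m,n$, once again by the commutativity of $\psi$-values.

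I expect the main obstacle to be purely organizational: spotting that $\lambda_a$ collapses to inner conjugation by $Q_a\in\psi(G)$, and then recognizing that such a conjugation automatically respects $\circ_m$ precisely because the cocycle-type twist $\psi_m(\cdot)$ takes values in the abelian image $\psi(G)$ and is therefore fixed under conjugation by $Q_a$. Once this structural observation is in hand the computations are routine; the only care required is bookkeeping of which inverse belongs to which operation and the systematic use of Lemma \ref{lem}(\ref{ab}) and (\ref{comm}) to commute abelian-map values. A brute-force expansion of both sides of the brace relation is also feasible, but it is messier and hides the reason the identity holds.
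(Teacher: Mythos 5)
Your proposal is correct, and it takes a genuinely different route from the paper. The paper proves the brace relation head-on: it expands all occurrences of $\circ_m$ and $\circ_n$ in
$(g\circ_n h)\circ_m \tilde g\circ_m(g\circ_n k)$ and collapses the result to $g\circ_n(h\circ_m k)$ using the two standing facts about abelian maps (values of $\psi_m$ and $\psi_n$ commute with one another, and both maps are constant on conjugacy classes). You instead make the standard reduction to the $\lambda$-map criterion: writing $\lambda_a(b)=a^{\ast}\circ_m(a\circ_n b)$, substituting $a\circ_n x=a\circ_m\lambda_a(x)$ turns the brace relation into $a\circ_m\lambda_a(b\circ_m c)=a\circ_m\lambda_a(b)\circ_m\lambda_a(c)$, so it suffices that each $\lambda_a$ be multiplicative for $\circ_m$; this reduction is valid, and bijectivity of $\lambda_a$ is not needed for the relation. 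Your computation of $\lambda_a$ is also correct: using $\psi_m(a^{\ast})=\psi_m(a^{-1})$ and $\psi_m((a^{\ast})^{-1})=\psi_m(a)$ (constancy on conjugacy classes), the factors of $a$ cancel and $\lambda_a(b)=Q_abQ_a^{-1}$ with $Q_a=\psi_m(a)\psi_n(a^{-1})\in\psi(G)$. The closing step is where the same two ingredients as in the paper's proof reappear: $Q_a$ centralizes the twisting factors $\psi_m(b^{\pm1})\in\psi(G)$, and $\psi_m(Q_abQ_a^{-1})=\psi_m(b)$, so conjugation by $Q_a$ preserves $\circ_m$. Thus both proofs rest on identical facts, but yours buys structural insight that the paper's computation hides: it exhibits the $\lambda$-maps of the brace $\Bee{m}{n}$ as inner automorphisms of $(G,\cdot)$ by elements of $\psi(G)$, which in particular explains the shape of the first coordinate $\psi_m(g)\psi_n(g^{-1})h\psi_n(g)\psi_m(g^{-1})=Q_ghQ_g^{-1}$ in the paper's explicit solutions $R_{m,n}$; the paper's brute-force expansion, by contrast, is shorter to write and needs no preliminary reduction. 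Your optional check that $Q_{a\circ_n a'}=Q_aQ_{a'}$ is also correct (both $\psi_m$ and $\psi_n$ kill the $\circ_n$-twist), but, as you note, it is not required: the brace relation is equivalent to each $\lambda_a$ being an endomorphism, with no homomorphism condition on $a\mapsto\lambda_a$ needed.
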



\begin{proof}
	We simply need to show that $(G,\circ_m,\circ_n)$ satisfies the brace relation for all $m,n\ge 0$, that is, 
	\[g\circ_{n} (h\circ_mk) = (g\circ_n h)\circ_m \tilde g \circ_m (g\circ_m k) \] for all $g,h,k\in G$, where $\tilde g = \psi_m(g)g^{-1}\psi_m(g^{-1})$ is the inverse to $g$ in $(G,\circ_m)$. 
	
Recall that $\psi_m$ and $\psi_n$ are constant on conjugacy classes. We have
	\begin{align*}
	(g\circ_n h)\circ_m \tilde g \circ_m (g\circ_m k) &= (g\psi_n(g^{-1})h\psi_n(g))\circ_m \psi_m(g)g^{-1}\psi_m(g^{-1}) \circ_m (g\psi_n(g^{-1})k\psi_n(g))\\
&=	\Big((g\psi_n(g^{-1})h\psi_n(g))\psi_m(h^{-1}g^{-1}) \psi_m(g)g^{-1}\psi_m(g^{-1}) \psi_m(gh)  \Big)\circ_m (g\psi_n(g^{-1})k\psi_n(g))\\
&= \Big((g\psi_n(g^{-1})h\psi_n(g))\psi_m(h^{-1}) g^{-1} \psi_m(h)  \Big)\circ_m (g\psi_n(g^{-1})k\psi_n(g))\\
&=\big((g\psi_n(g^{-1})h\psi_n(g))\psi_m(h^{-1}) g^{-1} \psi_m(h)  \big)\psi_m(h^{-1}) (g\psi_n(g^{-1})k\psi_n(g))\psi_m(h)\\
&=(g\psi_n(g^{-1})h\psi_n(g))\psi_m(h^{-1})\psi_n(g^{-1})k\psi_n(g)\psi_m(h)\\
&=g\psi_n(g^{-1})h\psi_m(h^{-1})\psi_n(g)\psi_n(g^{-1})k\psi_m(h)\psi_n(g)\tag{Lemma 1.2 (\ref{comm})}\\
&=g\psi_n(g^{-1})h\psi_m(h^{-1})k\psi_m(h)\psi_n(g)\\
&= g\circ_{n} (h\circ_mk).
	\end{align*} 
Thus the brace relation is satisfied and we are done.
\end{proof}

From this we immediately obtain the following.
\begin{corollary}
	Let $\psi\in\Ab(G)$. Then $\psi$ gives a brace block.
\end{corollary}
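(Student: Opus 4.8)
The plan is to read the result directly off the definition of a brace block together with Theorem \ref{main}; no genuinely new work is required. First I would recall that an abelian map $\psi\in\Ab(G)$ determines the entire family $\{\psi_n : n\ge 0\}$ via $\psi_n=-(1-\psi)^n+1$, and that the first Lemma guarantees $\psi_n\in\Ab(G)$ for every $n\ge 0$. This is exactly what licenses the definition of the binary operation $g\circ_n h=g\psi_n(g^{-1})h\psi_n(g)$ for each $n$, and, together with the Lemma establishing that $\G{n}$ is a group, ensures that every $\circ_n$ is a bona fide group operation on $G$. Thus $\psi$ produces a well-defined collection $\{\circ_n : n\ge 0\}$ of group operations indexed by all of $\mathbb{Z}^{\ge 0}$, precisely the indexing set demanded by the definition of a brace block.

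With this family in hand, I would simply observe that the definition of a brace block requires exactly that $\Bee{m}{n}$ be a brace for all $m,n\ge 0$. But this is verbatim the conclusion of Theorem \ref{main}. Hence taking $B=G$ with the operations $\{\circ_n\}$ constructed from $\psi$ satisfies the definition, and $\psi$ gives a brace block.

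There is essentially no obstacle once Theorem \ref{main} is available: the corollary amounts to matching hypotheses against a definition. The only point needing any care — and it has already been dispatched by the earlier lemmas — is the verification that $\psi$ really yields group operations $\circ_n$ for \emph{every} $n\ge 0$ (so that the family is indexed by the full nonnegative integers), rather than for only finitely many $n$ or up to some coincidence $\G{n+1}=\G{n}$ of the kind noted in Corollary \ref{abdone}. Since such coincidences merely cause operations in the family to repeat, they pose no difficulty: a repeated operation still indexes a legitimate member of the family, and $\Bee{m}{n}$ remains a brace in every case by Theorem \ref{main}.
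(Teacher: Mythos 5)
Your proposal is correct and matches the paper exactly: the paper states this corollary as an immediate consequence of Theorem \ref{main}, with the operations $\circ_n$ already established as group operations via the earlier lemmas, which is precisely your argument. Your extra remark that possible coincidences $\G{m}=\G{n}$ are harmless is a fine (if unnecessary) clarification, since the definition of a brace block does not require the operations to be distinct.
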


In theory we can construct an unlimited number of binary operations using $\psi$, but of course only a finite number of them will be distinct. We have the following result, which is closely related to \cite[Prop. 3.3]{Koch20}.

\begin{proposition} \label{same}
	With the notation as above, the operations $\circ_m$ and $\circ_n$ agree if and only if $(\psi_m-\psi_n)(G)\subset Z(G)$.
\end{proposition}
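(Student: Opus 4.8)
The plan is to unwind the definitions of the two operations and reduce the equality $\circ_m=\circ_n$ to a statement about inner automorphisms. By definition, $\circ_m$ and $\circ_n$ agree precisely when
\[g\psi_m(g^{-1})h\psi_m(g) = g\psi_n(g^{-1})h\psi_n(g)\]
for all $g,h\in G$. First I would cancel the leading $g$ and use that each $\psi_k$ is an endomorphism, so $\psi_k(g^{-1})=\psi_k(g)^{-1}$; this rewrites the condition as
\[\psi_m(g)^{-1}h\,\psi_m(g) = \psi_n(g)^{-1}h\,\psi_n(g)\quad\text{for all }g,h\in G,\]
i.e. conjugation by $\psi_m(g)$ and conjugation by $\psi_n(g)$ induce the same inner automorphism of $G$, for every fixed $g$.

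The key step is the standard fact that two group elements $a,b$ conjugate every element identically if and only if $ba^{-1}\in Z(G)$. Applying this with $a=\psi_m(g)$ and $b=\psi_n(g)$, the displayed condition holds for a fixed $g$ if and only if $\psi_n(g)\psi_m(g)^{-1}\in Z(G)$. At this point I would invoke Lemma \ref{lem} (\ref{ab}) to note that $\psi_m(g)$ and $\psi_n(g)$ both lie in the abelian subgroup $\psi(G)$ and hence commute; this lets me identify $\psi_n(g)\psi_m(g)^{-1}$ with the near-ring expression $(\psi_n-\psi_m)(g)=\psi_n(g)\psi_m(g^{-1})$. Running this equivalence over all $g$ shows that $\circ_m$ and $\circ_n$ agree if and only if $(\psi_n-\psi_m)(G)\subset Z(G)$.

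To land on the exact form in the statement, I would observe that since $\psi_m(g)$ and $\psi_n(g)$ commute we have $(\psi_m-\psi_n)(g)=\big((\psi_n-\psi_m)(g)\big)^{-1}$, and $Z(G)$ is closed under inverses; thus $(\psi_n-\psi_m)(G)\subset Z(G)$ is equivalent to $(\psi_m-\psi_n)(G)\subset Z(G)$, as required. Every implication in this chain is reversible, so both directions of the ``if and only if'' are handled simultaneously.

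There is essentially no serious obstacle here; the proof is a direct unwinding. The only points demanding care are the bookkeeping in matching the additive notation $\psi_m-\psi_n$ (defined through $(\phi+\psi)(g)=\phi(g)\psi(g)$ and $-\phi(g)=\phi(g^{-1})$) with the concrete group element $\psi_m(g)\psi_n(g)^{-1}$, and verifying that the direction of the subtraction is immaterial, which is exactly where the abelianness of $\psi(G)$ and the subgroup property of $Z(G)$ are used.
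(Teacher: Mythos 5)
Your proof is correct and takes essentially the same route as the paper: cancel the leading $g$, observe that the resulting identity says $\psi_m(g)$ and $\psi_n(g)$ induce the same conjugation on $G$, conclude that $(\psi_m-\psi_n)(g)=\psi_m(g)\psi_n(g^{-1})$ is central, and note every step is reversible for the converse. The only cosmetic differences are that you pass through $(\psi_n-\psi_m)(g)$ and then invert, and that your appeals to the commutativity of $\psi(G)$ are not actually needed, since $\psi_m(g^{-1})=\psi_m(g)^{-1}$ already holds because $\psi_m$ is an endomorphism.
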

\begin{proof}
	Suppose $g\circ_mh = g\circ_n h$ for all $g,h\in G$. Then
	\begin{align*}
	g\psi_m(g^{-1})h\psi_m(g) &=g\psi_n(g^{-1})h\psi_n(g) \\
	\psi_m(g^{-1})h\psi_m(g)&=\psi_n(g^{-1})h\psi_n(g) \\
	h &= \psi_m(g)\psi_n(g^{-1}) h \psi_n(g)\psi_m(g^{-1}),
	\end{align*}
	hence $\psi_m(g)\psi_n(g^{-1})=(\psi_m-\psi_n)(g)\in Z(G)$. The converse is trivial.
\end{proof}

As a result, we see patterns emerging in the binary operations.

\begin{corollary}
	If $g\circ_m h = g\circ_n h$ for all $g,h\in G$, then $g\circ_{m+1} h = g\circ_{n+1} h$ for all $g,h\in G$.
\end{corollary}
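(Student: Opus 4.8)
The plan is to reduce the statement to Proposition~\ref{same} and then push the hypothesis through the recursion in Lemma~\ref{lem}(\ref{recur}). By Proposition~\ref{same}, the hypothesis $g\circ_m h = g\circ_n h$ for all $g,h$ is equivalent to $(\psi_m-\psi_n)(G)\subseteq Z(G)$, and the desired conclusion $g\circ_{m+1}h=g\circ_{n+1}h$ is equivalent to $(\psi_{m+1}-\psi_{n+1})(G)\subseteq Z(G)$. So the entire problem becomes: assuming $(\psi_m-\psi_n)(g)\in Z(G)$ for every $g$, show $(\psi_{m+1}-\psi_{n+1})(g)\in Z(G)$ for every $g$.

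First I would unwind the recursion. Lemma~\ref{lem}(\ref{recur}) gives $\psi_{k+1}=\psi+\psi_k(1-\psi)$, and since $(1-\psi)(g)=g\psi(g^{-1})$, evaluating yields $\psi_{k+1}(g)=\psi(g)\,\psi_k\!\big(g\psi(g^{-1})\big)$. Writing $a=\psi(g)$ and $b=(1-\psi)(g)=g\psi(g^{-1})$, this reads $\psi_{m+1}(g)=a\,\psi_m(b)$ and $\psi_{n+1}(g)=a\,\psi_n(b)$, with the \emph{same} $a$ and $b$ in both lines.

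Next I would compute the difference. The one point needing care is that addition in $\Map(G)$ is nonabelian, so $(\psi_{m+1}-\psi_{n+1})(g)$ must a priori be read as $\psi_{m+1}(g)\,\psi_{n+1}(g^{-1})$. However, by Lemma~\ref{lem}(\ref{ab}) both $\psi_{m+1}(G)$ and $\psi_{n+1}(G)$ lie in the abelian group $\psi(G)$, so $\psi_{n+1}(g^{-1})=\psi_{n+1}(g)^{-1}$ and the two values commute; hence $(\psi_{m+1}-\psi_{n+1})(g)=\psi_{m+1}(g)\,\psi_{n+1}(g)^{-1}$. Substituting the expressions above, the leading $a$'s bracket a conjugation:
\[(\psi_{m+1}-\psi_{n+1})(g)=a\,\psi_m(b)\,\psi_n(b)^{-1}a^{-1}=a\,(\psi_m-\psi_n)(b)\,a^{-1}.\]
By hypothesis $(\psi_m-\psi_n)(b)\in Z(G)$, so it is fixed by conjugation by $a$, and the whole expression equals $(\psi_m-\psi_n)(b)\in Z(G)$. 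Thus $(\psi_{m+1}-\psi_{n+1})(G)\subseteq Z(G)$, and Proposition~\ref{same} delivers $g\circ_{m+1}h=g\circ_{n+1}h$.

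I expect no serious obstacle here; the only genuine subtlety — and the thing I would be careful to flag — is the passage from the formal difference $\psi_{m+1}-\psi_{n+1}$ in the nonabelian near-ring $\Map(G)$ to the honest group quotient $\psi_{m+1}(g)\,\psi_{n+1}(g)^{-1}$, which is licensed precisely because both images land in the abelian subgroup $\psi(G)$. Once that identification is in place, the result reduces to the trivial observation that conjugation fixes central elements.
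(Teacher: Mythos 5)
Your proof is correct and follows essentially the same route as the paper's: both reduce the statement via Proposition~\ref{same} to showing that $(\psi_m-\psi_n)(G)\subseteq Z(G)$ implies $(\psi_{m+1}-\psi_{n+1})(G)\subseteq Z(G)$, and both then apply the recursion of Lemma~\ref{lem}(\ref{recur}). The only difference is cosmetic: the paper carries out the computation formally in the near-ring (using right distributivity and Lemma~\ref{lem}(\ref{comm}) to get $\psi_{m+1}-\psi_{n+1}=(\psi_m-\psi_n)(1-\psi)$), whereas you evaluate pointwise and dispose of the conjugation by $\psi(g)$ using centrality, which amounts to the same identity.
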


\begin{proof}
	It suffices to show that if $(\psi_m-\psi_n)\subset Z(G)$ then $(\psi_{m+1}-\psi_{n+1})\subset Z(G)$. Using Lemma \ref{lem} (\ref{comm}), (\ref{recur}) and the right distributive property in the near-ring we have
	\[\psi_{m+1}-\psi_{n+1} = \psi+\psi_m(1-\psi)-\psi-\psi_n(1-\psi) = (\psi_m-\psi_n)(1-\psi),\]
	and clearly $(\psi_m-\psi_n)(1-\psi)(G)\subset (\psi_m-\psi_n)(G)\subset Z(G)$.
\end{proof}

For future reference, we also note the following.

\begin{proposition}
	Let $\psi\in\Ab(G)$. Then $(1-\psi)(G)\le G$. Furthermore, $(G,\circ)$ is abelian if and only if $((1-\psi)(G),\cdot)$ is abelian.
\end{proposition}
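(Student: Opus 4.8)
The plan is to reduce both assertions to a single ``normal form'' for the operation $\circ=\circ_1$, obtained by separating each factor of $g\circ h$ into its $(1-\psi)$-part and its $\psi$-part. Recall that $g\circ h = g\psi(g^{-1})h\psi(g)$ and that $(1-\psi)(g)=g\psi(g^{-1})$, so the leading factor of $g\circ h$ is already $(1-\psi)(g)$. Writing also $h=(1-\psi)(h)\psi(h)$ and using that $\psi(G)$ is abelian to interchange $\psi(g)$ and $\psi(h)$, I would first establish
\[ g\circ h = (1-\psi)(g)\,(1-\psi)(h)\,\psi(g)\psi(h), \qquad g,h\in G. \]
This identity is the engine for everything that follows, and its derivation is the only place where the abelian-image hypothesis is used essentially.

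For the subgroup claim I would combine this with the computation $\psi(g\circ h)=\psi(g)\psi(h)$ — itself immediate from $\psi$ being an endomorphism with abelian image — to obtain $(1-\psi)(g\circ h)=(g\circ h)\psi(g\circ h)^{-1}=(1-\psi)(g)(1-\psi)(h)$. In other words, $1-\psi$ is a group homomorphism from $(G,\circ)$ to $(G,\cdot)$; note that it is \emph{not} a homomorphism on $(G,\cdot)$ itself, which is precisely why the detour through $\circ$ is needed. Since $(G,\circ)$ is a group, its image $(1-\psi)(G)$ under this homomorphism is a subgroup of $(G,\cdot)$, giving $(1-\psi)(G)\le G$.

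For the equivalence I would read off the normal form for both $g\circ h$ and $h\circ g$. Because $\psi(g)\psi(h)=\psi(h)\psi(g)$, the two trailing factors coincide, so cancelling them on the right shows that $g\circ h = h\circ g$ holds if and only if $(1-\psi)(g)$ and $(1-\psi)(h)$ commute in $(G,\cdot)$. Quantifying over all $g,h\in G$ then yields both implications simultaneously: $(G,\circ)$ is abelian exactly when every pair of elements of $(1-\psi)(G)$ commutes, that is, exactly when $((1-\psi)(G),\cdot)$ is abelian.

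I expect no serious obstacle here: the substance is concentrated in the normal-form identity, and the main point requiring care is the bookkeeping of the $\psi$-image factors, where the commutativity of $\psi(G)$ must be invoked at exactly the right moments. The only genuinely conceptual step is recognizing that $1-\psi$ should be regarded as a map \emph{out of} $(G,\circ)$ rather than $(G,\cdot)$, after which both halves of the proposition fall out of the same short calculation.
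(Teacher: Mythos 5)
Your proof is correct and is essentially the paper's own argument in slightly different packaging: your normal-form identity $g\circ h=(1-\psi)(g)\,(1-\psi)(h)\,\psi(g)\psi(h)$ is exactly the intermediate step in the paper's chain of equalities for the abelianness equivalence, and your observation that $1-\psi$ is a homomorphism $(G,\circ)\to(G,\cdot)$ is precisely the paper's closure identity $\big((1-\psi)(g)\big)\big((1-\psi)(h)\big)=(1-\psi)(g\psi(g^{-1})h\psi(g))$. The only cosmetic differences are that the paper reduces $g\circ h=h\circ g$ via conjugacy-invariance of $\psi$ rather than by factoring $h=(1-\psi)(h)\psi(h)$, and it gets the subgroup claim from closure under multiplication (implicitly using finiteness of $G$) rather than as the image of a group homomorphism.
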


\begin{proof}
	To show that $(1-\psi)(G)\le G$ it suffices to show that $(1-\psi)(G)$ is closed under the usual operation on $G$, which follows from the identity
	\[\big((1-\psi)(g)\big)\big((1-\psi)(h)\big)= (1-\psi)(g\psi(g^{-1})h\psi(g)),\] 
	which can be readily verified. 
	Now suppose $g\circ h = h\circ g$. Then
	\begin{align*} 
	g\psi(g^{-1})h\psi(g) &= h\psi(h^{-1})g\psi(h) \\
	g\psi(g^{-1})h\psi(h^{-1}gh) &= h\psi(h^{-1})g\psi(g^{-1}hg)\\
	g\psi(g^{-1})h\psi(h^{-1})\psi(gh) &= h\psi(h^{-1})g\psi(g^{-1})\psi(gh)\\
	g\psi(g^{-1})h\psi(h^{-1}) &= h\psi(h^{-1})g\psi(g^{-1}).
	\end{align*}
	Thus, $(G,\circ)$ is abelian if and only if $(g\psi(g^{-1}))(h\psi(h^{-1})) = (h\psi(h^{-1}))(g\psi(g^{-1}))$, that is, $(1-\psi)(G)$ is abelian.
\end{proof}

The above will be most useful when applied to $\psi_n$.

\begin{corollary}
	Let $\psi\in\Ab(G)$. Then $(1-\psi_n)(G)\le G$. Furthermore, $(G,\circ_n)$ is abelian if and only if $((1-\psi)^n(G),\cdot)$ is abelian.
\end{corollary}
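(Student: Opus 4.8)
The plan is to obtain both assertions as a direct specialization of the Proposition above, applied not to $\psi$ but to the abelian map $\psi_n$. This is legitimate because $\psi_n\in\Ab(G)$ by the first Lemma, and the Proposition is stated for an arbitrary element of $\Ab(G)$, so I may invoke it as a black box with $\psi$ replaced by $\psi_n$. The one point that genuinely requires attention is identifying which operation plays the role of $\circ$ in that statement. Recall that $\circ$ is the operation attached to $\psi$ via $g\circ h=g\psi(g^{-1})h\psi(g)$; the operation attached in the same way to $\psi_n$ is therefore $g\mapsto g\psi_n(g^{-1})h\psi_n(g)$, which is precisely $\circ_n$ by its defining formula. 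Once this identification is made explicit, no further computation is needed.

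With the map taken to be $\psi_n$ and its associated operation recognized as $\circ_n$, the Proposition yields two statements simultaneously: first, $(1-\psi_n)(G)\le G$; and second, $(G,\circ_n)$ is abelian if and only if $((1-\psi_n)(G),\cdot)$ is abelian. It then remains only to rewrite the subgroup $(1-\psi_n)(G)$ in the form claimed in the corollary. This is immediate from Lemma \ref{lem} (\ref{comp}), which gives the equality of maps $(1-\psi)^n=1-\psi_n$, whence $(1-\psi_n)(G)=(1-\psi)^n(G)$ as subsets of $G$. Substituting this equality into the two facts furnished by the Proposition gives exactly $(1-\psi)^n(G)\le G$ and the equivalence that $(G,\circ_n)$ is abelian if and only if $((1-\psi)^n(G),\cdot)$ is abelian.

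I do not expect a substantive obstacle here: all of the real content is already packaged in the preceding Proposition and in the identity $(1-\psi)^n=1-\psi_n$. The only step meriting care is the verification that the $\circ$-operation built from $\psi_n$ coincides with $\circ_n$, rather than with some composed or iterated operation one might naively expect from the subscript; once this is confirmed the corollary follows by pure substitution, so I would present it in a few lines rather than reproducing the computations inside the Proposition.
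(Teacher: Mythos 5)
Your proposal is correct and matches the paper's own proof, which likewise applies the preceding Proposition to $\psi_n$ and invokes the identity $1-\psi_n=(1-\psi)^n$ from Lemma \ref{lem}. Your explicit check that the $\circ$-operation attached to $\psi_n$ is precisely $\circ_n$ is a worthwhile clarification the paper leaves implicit, but the argument is the same.
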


\begin{proof} Apply the proposition to $\psi_n$, and recall that $1-\psi_n=(1-\psi)^n$. \end{proof}

\begin{example}
	Let $G=D_4=\gen{r,s:r^4=s^2=rsrs=1_G}$ be the dihedral group of order $8$. Let $\psi:G\to G$ be given by $\psi(r)=1,\;\psi(s)=r^2s$. It is easy to show that $\psi\in\Ab(G)$ (see also \cite[\S 6]{Koch20}). Since $(1-\psi)(r)=r$ and $(1-\psi)(s)=r^2$ we see that $(1-\psi)(G)=\gen{r}$ is abelian. Thus, $(G,\circ)$ is abelian.
\end{example}

\begin{example} Let $G=\mathrm{Aff}(\mathbb{F}_5)$, the affine group of the finite field with five elements. Then $G\cong C_5\rtimes \Aut(C_5)$ where $\Aut(C_5)$ acts on $C_5$ in the obvious way. If we let $C_5=\gen{g}$ and let $\alpha\in\Aut(C_5)$ be given by $\alpha(g)=g^2$ then 
	\[G=\gen{g,\alpha: g^5=\alpha^4 = g\alpha g^2\alpha^3=1_G}.\]
	Define $\psi:G\to G$ by $\psi(g)=1_G,\;\psi(\alpha)=\alpha^{-1}$. Then $\psi$ satisfies the relations above, and since $\psi(G)=\gen{\alpha}$ it follows that $\psi\in\Ab(G)$. We have
	\[(1-\psi)(g) = g,\; (1-\psi)(\alpha) = \alpha^2,\]
	hence $(1-\psi)(G)=\gen{g,\alpha^2}$ which is nonabelian since $\alpha^2 g = g^{-1}\alpha$, hence $(G,\circ)$ is nonabelian. (In fact, $(1-\psi)(G)\cong D_5$, the dihedral group of order $10$.) However,
	\[(1-\psi)^2(g) = g,\; (1-\psi)^2(\alpha) = (1-\psi)(\alpha^2) = 1_G\]
	so $\G{2}$ is abelian. It is not hard to show that $\G{2}\cong C_5\times C_4\cong C_{20}$.
	\end{example}

By applying the techniques of obtaining solutions from braces, we obtain:
\begin{theorem}
	Let $\psi\in\Ab(G)$. Then for all $m,n$ we have the following solutions to the Yang-Baxter equation:
	\begin{align*}
		R_{m,n}(g,h)&=\Big(\psi_m(g)\psi_n(g^{-1})h\psi_n(g)\psi_m(g^{-1}),\psi_m(g)\psi_n(g^{-1}h)h^{-1}\psi_n(g)\psi_m(g^{-1})g\psi_n(g^{-1})h\psi_n(gh^{-1})\Big)\\
		R_{m,n}'(g,h)&=\Big(g\psi_n(g^{-1})h\psi_n(g)\psi_m(h^{-1})g^{-1}\psi_m(h),\psi_n(h)\psi_m(h^{-1})g\psi_m(h)\psi_n(h^{-1})\Big)\\
	\end{align*}
\end{theorem}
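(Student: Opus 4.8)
The plan is to observe that no fresh Yang--Baxter verification is required: by Theorem \ref{main} the triple $(G,\circ_m,\circ_n)$ is a brace, and every brace yields the pair of solutions recorded in (\ref{one}) and (\ref{two}). Taking $\circ_m$ in the role of $\cdot$ and $\circ_n$ in the role of $\circ$, the abstract formulas $R$ and $R'$ become $R_{m,n}$ and $R'_{m,n}$, so the whole task reduces to rewriting these expressions purely in terms of $\psi_m$, $\psi_n$, and the ambient product. Concretely, I would substitute $g\circ_n h=g\psi_n(g^{-1})h\psi_n(g)$, the $\circ_m$-inverse $\tilde g=\psi_m(g)g^{-1}\psi_m(g^{-1})$, and the $\circ_n$-inverse $\bar g=\psi_n(g)g^{-1}\psi_n(g^{-1})$ directly into (\ref{one}) and (\ref{two}). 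Nondegeneracy and the braid relation are then inherited from the general construction, so the theorem is proved once the two rewritings are checked.

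The computation is kept tractable by two structural facts. First, every value of $\psi_j$ lies in the abelian group $\psi(G)$ by Lemma \ref{lem}(\ref{ab}), so all symbols of the form $\psi_m(\cdot)$ and $\psi_n(\cdot)$ commute with one another by Lemma \ref{lem}(\ref{comm}); this lets me slide abelian-map factors freely past each other inside a word. Second, because each $\psi_j$ is an endomorphism with abelian image, a one-line calculation gives $\psi_j(g\circ_k h)=\psi_j(g)\psi_j(h)$ for all $j,k\ge 0$, i.e.\ $\psi_j$ carries every operation $\circ_k$ to the ambient product. In particular $\psi_j$ sends a $\circ_k$-inverse to a $\cdot$-inverse, which yields the crucial identities $\psi_m(\tilde g)=\psi_m(g^{-1})$, $\psi_m(\tilde g^{-1})=\psi_m(g)$ and their $\psi_n$-analogues. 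These are exactly the values of the ``outer'' factors produced when one forms a $\circ_m$- or $\circ_n$-product.

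With these in hand the first components fall out quickly. For $R_{m,n}$ the first slot is $\tilde g\circ_m(g\circ_n h)$; expanding the $\circ_m$-product, replacing $\psi_m(\tilde g^{\pm1})$ by $\psi_m(g^{\mp1})$, and cancelling the resulting $g^{-1}g$ and $\psi_m(g^{-1})\psi_m(g)$ pairs collapses it to $\psi_m(g)\psi_n(g^{-1})h\psi_n(g)\psi_m(g^{-1})$. The first slot of $R'_{m,n}$, namely $(g\circ_n h)\circ_m\tilde g$, is handled identically after noting $\psi_m(g\circ_n h)=\psi_m(g)\psi_m(h)$, telescoping to the stated $g\psi_n(g^{-1})h\psi_n(g)\psi_m(h^{-1})g^{-1}\psi_m(h)$.

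The real labour, and the step I expect to be the main obstacle, is the second components, each having the shape $\bar u\circ_n g\circ_n h$ with $u$ the (already long) first component. The decisive simplification is that the homomorphism property collapses the enormous expression $\psi_n(u)$ all the way down to $\psi_n(h)$: writing $u$ as a word in $\circ_m$- and $\circ_n$-products of $g,\tilde g,h$ and applying $\psi_n(a\circ_k b)=\psi_n(a)\psi_n(b)$ repeatedly, every factor except the $h$-term pairs off with its inverse in the abelian image. Substituting $\bar u=\psi_n(u)\,u^{-1}\,\psi_n(u^{-1})=\psi_n(h)\,u^{-1}\,\psi_n(h^{-1})$ and forming the $\circ_n$-product with $g\circ_n h$ (whose outer factors are again read off from $\psi_n(\bar u)=\psi_n(h)^{-1}$) produces a long word in which a cascade of adjacent inverse pairs cancels; recombining the surviving factors via $\psi_n(g^{-1})\psi_n(h)=\psi_n(g^{-1}h)$ and $\psi_n(g)\psi_n(h^{-1})=\psi_n(gh^{-1})$ gives the printed second component of $R_{m,n}$, while the analogous cancellation for $R'_{m,n}$ telescopes to the clean $\psi_n(h)\psi_m(h^{-1})g\psi_m(h)\psi_n(h^{-1})$. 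The difficulty here is purely bookkeeping in long words over $G$; the two structural facts of the second paragraph are what make every product telescope.
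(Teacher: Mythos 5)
Your proposal is correct and is exactly the paper's argument: the paper proves this theorem by remarking that it is "a straightforward computation using Equations (\ref{one}) and (\ref{two})" applied to the brace $(G,\circ_m,\circ_n)$ of Theorem \ref{main}, which is precisely your substitution $\cdot\mapsto\circ_m$, $\circ\mapsto\circ_n$. Your supporting identities ($\psi_j(g\circ_k h)=\psi_j(g)\psi_j(h)$, $\psi_m(\tilde g)=\psi_m(g^{-1})$, and the collapse $\psi_n(u)=\psi_n(h)$ for the first component $u$) are all valid and make the bookkeeping go through as you describe.
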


The proof is a straightforward computation using Equations (\ref{one}) and (\ref{two}). In the case $m=0,\;n=1$ one can quickly recover the solutions $R_{1,\psi}$ and $R_{2,\psi}$ from \cite[Cor. 5.4]{Koch20}, formulas which also appear in \cite[Th. 5.1]{KochStordyTruman20} where $\psi$ is subject to a ``fixed point free'' condition (as explained below).

Note that if $m=n$ then we have $R_{n,n}=R'_{n,n}$.

\section{Preliminary Examples}

The motivation of the construction of brace blocks was to construct large families of braces emanating from a single abelian map $\psi$. Unfortunately, many simple examples of abelian maps found in \cite{Childs13}, \cite{KochStordyTruman20}, and \cite{Koch20} do not give large families. The maps considered in \cite{Childs13} and \cite{KochStordyTruman20} are subject to an additional condition: that of being fixed point free. An endomorphism $\psi:G\to G$ is said to be {\it fixed point free} if $\psi(g)=g$ if and only if $g=1_G$.

\begin{example}
	Let $G=D_n=\gen{r,s:r^n=s^2=rsrs=1}$. Any $\psi\in G$ must send $r$ and $s$ to elements of order dividing $2$. Thus, $2\psi = 0$. We will consider three cases.
	
	The first case is when $\psi=0$ is the trivial map. Of course, then $\psi_n = -(1-0)^n+1 = 0$ is also trivial, and we get a trivial brace block.
	
	Now suppose $\psi$ is fixed point free and nontrivial. Then, by \cite[\S 6]{Childs13}, $\psi(G)$ is a group of order 2, say $\psi(G)=\{1_G,x\}$. Since $\psi$ is fixed point free, $\psi(x)=1_G$, hence $\psi(\psi(g))=1_G$ for all $g\in G$, i.e., $\psi^2= 0$. Thus, for $k\ge 2$ we have
	\[\psi_k=\sum_{i=1}^k (-1)^{i-1}\binom ki \psi^{i} = k\psi=\begin{cases} \psi & k\text{ odd} \\ 0 & k\text{ even}\end{cases}. \]
Furthermore, both $(G,\cdot,\cdot)$ and $(G,\circ,\circ)$ are the trivial brace on $G$, hence $(G,\cdot,\cdot)=(G,\circ,\circ)$. It is known (see, e.g., \cite[Ex. 3.4]{KochTruman20b}) that a trivial brace is not isomorphic to a non-trivial brace; furthermore  we will see that $(G,\circ,\cdot)\cong(G,\cdot,\circ)$ (see Corollary \ref{fpfblah}). Consequently, this brace block has two nonisomorphic braces, namely $(G,\cdot,\cdot)$ and $(G,\cdot,\circ)$. Each provides two solutions to the Yang-Baxter equation, giving us four solutions in total.
	
	Now suppose $\psi$ has fixed points. Again, $\psi(G)$ is a group of order $2$, as shown in \cite[\S 6]{Koch20}. If we write $\psi(G)=\{1,x\}$ then $x$ must be the fixed point. Thus, $\psi^2(g)=\psi(g)$ for all $g\in G$ so $\psi^k=\psi$ for all $k\ge 1$. Thus,
	\[\psi_k=\sum_{i=1}^k (-1)^{i-1}\binom ki \psi^{i} = \left(\sum_{i=1}^k (-1)^{i-1}\binom ki\right)\psi=\psi. \]
	 Therefore, $\Bee{m}{n}=(G,\circ,\circ)$ for $m,n\ge 1$. By \cite[\S6]{Koch20} we know that $(G,\circ)\not\cong D_n$--it is, in fact, either $C_n\times C_2$ or $D_{n/2}\times C_2$ depending on the parity of $n$ and the choice of $x$--so we have four nonisomorphic braces: $(G,\cdot,\cdot),\;(G,\cdot,\circ),\;(G,\circ,\cdot)$, and $(G,\circ,\circ)$. If $(G,\circ)\cong D_{n/2}\times C_2$ then each gives two solutions to the Yang-Baxter equation, giving us a total of $8$ solutions, whereas if $(G,\circ)\cong C_n\times C_2$ we obtain $6$ solutions.

\end{example}

\begin{example}
	Let $G=S_n,\;n\ge 5$. Here each abelian map sends all even permutations to the identity and all odd permutations to an element of order $2$, say $\xi$. As above we have $2\psi=0$. If $\xi\in A_n$ then $\psi$ is fixed point free and clearly $\psi^2=0$. If $\xi\notin A_n$ then $\psi(G)=\gen{\xi}$ and $\psi^2=\psi$. 
	
	Thus we get the same two cases as above. In the fixed point free case we get the two braces $(G,\cdot,\cdot)$ and $(G,\cdot,\circ)$, and in the case with fixed points we get $(G,\cdot,\cdot),\;(G,\cdot,\circ),\;(G,\circ,\cdot)$, and $(G,\circ,\circ)$. In the latter case, $(G,\circ)\cong A_n\times C_2$ hence we will always get $8$ solutions.
\end{example}

\begin{example}\label{split}
	Suppose $G=HK= H\rtimes K$ with $K$ abelian. Then $\psi(hk)=k$ is an abelian map. Since
$\psi^2(hk) = k\text{ and } \psi((hk)^2)=k^2$.
	we get $\psi^2=\psi$ and $\psi_k=\psi$ as above. Our brace block produces braces $(G,\cdot,\cdot),\;(G,\cdot,\circ),\;(G,\circ,\cdot)$, and $(G,\circ,\circ)$, and as $(G,\circ)\cong H\times K$ we get $6$ or $8$ solutions depending on whether $H$ is abelian. 
\end{example}

We conclude this section with a closer look at the case where $\psi\in \Ab(G)$ is fixed point free. First, we note:

\begin{proposition}
	Let $\psi\in\Ab(G)$. Then $\psi$ is fixed point free if and only if $\psi_n$ is fixed point free for all $n\ge 1$.
\end{proposition}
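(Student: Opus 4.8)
The plan is to reduce being fixed point free to the injectivity of a single set map, and then exploit the fact that $1-\psi_n$ is literally an $n$-fold composite of $1-\psi$. For any $\phi\in\Ab(G)$, an element $g$ is a fixed point of $\phi$ exactly when $(1-\phi)(g)=g\phi(g^{-1})=1_G$, so the fixed-point set of $\phi$ is precisely the fibre $(1-\phi)^{-1}(1_G)$. Moreover the identity $(1-\phi)(g)(1-\phi)(h)=(1-\phi)\big(g\phi(g^{-1})h\phi(g)\big)$ -- verified in the proposition above for $\phi=\psi$ and holding verbatim for every $\phi\in\Ab(G)$ since only the commutativity of $\phi(G)$ is used -- shows that $1-\phi$ is a group homomorphism from $(G,\circ_\phi)$ to $(G,\cdot)$, where $g\circ_\phi h=g\phi(g^{-1})h\phi(g)$. (For $\phi=\psi$ this operation is $\circ$, and for $\phi=\psi_n$ it is $\circ_n$; each $(G,\circ_\phi)$ is a group with identity $1_G$ as recorded in the text.) Since the identity is $1_G$, such a homomorphism is injective if and only if its kernel $(1-\phi)^{-1}(1_G)$ is trivial, i.e. if and only if $\phi$ is fixed point free. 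This yields the clean equivalence I intend to use: \emph{$\phi$ is fixed point free if and only if the set map $1-\phi$ is injective}, valid for every $\phi\in\Ab(G)$, in particular for $\psi$ and for $\psi_n$ (recall $\psi_n\in\Ab(G)$).

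With this equivalence the proof is short. The direction $(\Leftarrow)$ is immediate: if every $\psi_n$ is fixed point free, then so is $\psi_1=\psi$. For $(\Rightarrow)$, assume $\psi$ is fixed point free, so by the equivalence the set map $1-\psi$ is injective. By Lemma \ref{lem}(\ref{comp}) we have $1-\psi_n=(1-\psi)^n$, and in the near-ring $\Map(G)$ the power $(1-\psi)^n$ is the $n$-fold composite of $1-\psi$ with itself. A composite of injective maps is injective, hence $1-\psi_n$ is injective, and applying the equivalence now to $\phi=\psi_n$ shows that $\psi_n$ is fixed point free for all $n\ge 1$.

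The main obstacle is the first paragraph: upgrading ``fixed point free'' -- which is only a statement about the fibre of $1-\psi$ over $1_G$ -- to genuine injectivity of $1-\psi$. This is exactly where the group structure on $(G,\circ)$ is essential, for without recognizing $1-\psi$ as a homomorphism out of $(G,\circ)$ the triviality of the fixed-point set would not by itself force injectivity. Once that homomorphism observation is in place, the remainder is the purely formal fact that injectivity is preserved under composition, combined with the identity $1-\psi_n=(1-\psi)^n$ already available from Lemma \ref{lem}(\ref{comp}).
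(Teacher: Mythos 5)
Your proof is correct, and it takes a genuinely different route from the paper's. The paper proves the hard direction ($\psi$ fixed point free $\Rightarrow$ every $\psi_n$ is) by a minimal-counterexample argument: if $n$ is least such that $\psi_n(g)=g$ for some $g\ne 1_G$, then the recursion $\psi_n=\psi+\psi_{n-1}(1-\psi)$ of Lemma \ref{lem} (\ref{recur}), together with the fact that abelian maps are constant on conjugacy classes, shows that $\psi(g^{-1})g$ is a nontrivial fixed point of $\psi_{n-1}$, a contradiction; for the other direction the paper shows the stronger statement that a fixed point of $\psi$ remains a fixed point of every $\psi_n$. You instead reduce everything to the equivalence ``$\phi$ is fixed point free if and only if $1-\phi$ is injective,'' obtained by viewing $1-\phi$ as a homomorphism $(G,\circ_\phi)\to(G,\cdot)$ whose kernel is exactly the fixed-point set, and you then finish with Lemma \ref{lem} (\ref{comp}), $1-\psi_n=(1-\psi)^n$, plus the fact that a composite of injections is injective. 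Both arguments are sound; yours is arguably more conceptual and anticipates machinery the paper develops immediately afterwards, namely the proposition that $1-\psi:\G{n}\to\G{n-1}$ is a homomorphism and Corollary \ref{fpfblah}, whose proof invokes Gorenstein's lemma that $1-\psi$ is bijective exactly when $\psi$ is fixed point free. One remark: your key equivalence does not actually need the group structure on $(G,\circ_\phi)$ --- if $(1-\phi)(g)=(1-\phi)(h)$, then cancelling gives $h^{-1}g=\phi(h^{-1}g)$, so fixed point freeness alone forces injectivity; this direct computation works for any endomorphism and would shorten your first paragraph considerably. What the paper's route buys in exchange is the slightly stronger conclusion in the easy direction (the same element stays fixed under all $\psi_n$) and a proof that is self-contained within the recursion of Lemma \ref{lem}.
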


\begin{proof}
	First, suppose $\psi(g)=g$ for some $g\in G,\;g\ne 1_G$. Note that $g^{-1}$ is also a fixed point of $\psi$. Then $(1-\psi)(g^{-1})=1_G$ and
	\[\psi_n(g)=(-(1-\psi)^n+1)(g) =\left((1-\psi)^n(g^{-1})\right)g = g\] 
	and hence $g$ is a fixed point of $\psi_n$ for all $n$.
	
	Conversely, suppose $\psi$ is fixed point free. Let $n$ be the smallest positive integer such that $\psi_n$ has nontrivial fixed points, say $\psi_{n}(g)=g$ for $g\ne 1_G$. By Lemma \ref{lem}, (\ref{recur}) we have
	\[
		g=\psi_{n}(g)=(\psi+\psi_{n-1}(1-\psi))(g)
		=\psi(g)\psi_{n-1}(g\psi(g^{-1})),
	\]
	which we may rewrite as
	\[\psi(g^{-1})g = \psi_{n-1}(g\psi(g^{-1})) = \psi_{n-1}(\psi(g^{-1})g),\]
	and so $\psi(g^{-1})g$ is a fixed point of $\psi_{n-1}$. But as $\psi_{n-1}$ is assumed to have no nontrivial fixed points it follows that $\psi(g^{-1})g=1_G$, i.e., $g=\psi(g)$, hence $g=1_G$, a contradiction. Thus, $\psi_n$ is fixed point free for all $n\ge 1$.
\end{proof}

The next result we will apply to fixed point free maps, however we state it more generally.

\begin{proposition}
	Let $\psi\in \Ab(G)$, and for $n\ge 1$ let $\phi=(1-\psi)\in\Map(G)$. Then $\phi:(G,\circ_n)\to (G,\circ_{n-1})$ is a homomorphism.
\end{proposition}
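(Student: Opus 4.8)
The plan is to verify the homomorphism property by direct computation, showing that both $\phi(g\circ_n h)$ and $\phi(g)\circ_{n-1}\phi(h)$ reduce to the single expression $g\psi_n(g^{-1})h\psi_n(g)\psi(h^{-1}g^{-1})$. Throughout I would write $\phi(g)=(1-\psi)(g)=g\psi(g^{-1})$ and use freely that $\psi_n(G)\le\psi(G)$ (Lemma \ref{lem}(\ref{ab})), so that any two values of the maps $\psi,\psi_2,\psi_3,\dots$ commute in $(G,\cdot)$.

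For the right-hand side, the essential observation is that $\psi_{n-1}$ applied to a $\phi$-image collapses via the recursion. Lemma \ref{lem}(\ref{recur}) gives $\psi_n=\psi+\psi_{n-1}(1-\psi)$, and since both $\psi$ and $\psi_{n-1}(1-\psi)$ take values in the abelian group $\psi(G)$ they commute additively, so $\psi_{n-1}(1-\psi)=\psi_n-\psi$. Evaluating, $\psi_{n-1}(\phi(g))=(\psi_n-\psi)(g)=\psi_n(g)\psi(g^{-1})$ and $\psi_{n-1}(\phi(g)^{-1})=\psi(g)\psi_n(g^{-1})$. Substituting these together with $\phi(g)=g\psi(g^{-1})$ and $\phi(h)=h\psi(h^{-1})$ into $\phi(g)\circ_{n-1}\phi(h)=\phi(g)\,\psi_{n-1}(\phi(g)^{-1})\,\phi(h)\,\psi_{n-1}(\phi(g))$, the adjacent factors $\psi(g^{-1})\psi(g)$ telescope to the identity; after one application of commutativity in $\psi(G)$ to move $\psi(h^{-1})$ past $\psi_n(g)$, the right-hand side becomes $g\psi_n(g^{-1})h\psi_n(g)\psi(h^{-1}g^{-1})$.

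For the left-hand side I would expand $\phi(g\circ_n h)=(g\circ_n h)\,\psi\big((g\circ_n h)^{-1}\big)$ with $g\circ_n h=g\psi_n(g^{-1})h\psi_n(g)$, so that $(g\circ_n h)^{-1}=\psi_n(g^{-1})h^{-1}\psi_n(g)g^{-1}$. Since $\psi$ is an endomorphism, $\psi\big((g\circ_n h)^{-1}\big)$ is a product of four elements of $\psi(G)$; as $\psi(G)$ is abelian the two factors coming from $\psi_n(g^{-1})$ and $\psi_n(g)$ cancel, because $\psi(\psi_n(g^{-1}))\psi(\psi_n(g))=\psi(\psi_n(1))=1$, leaving simply $\psi(h^{-1}g^{-1})$. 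Hence $\phi(g\circ_n h)=g\psi_n(g^{-1})h\psi_n(g)\psi(h^{-1}g^{-1})$, matching the right-hand side.

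The computations are routine once the near-ring arithmetic is handled carefully; the only real point requiring attention is the identity $\psi_{n-1}(1-\psi)=\psi_n-\psi$ coming from Lemma \ref{lem}(\ref{recur}), which is exactly what forces the $\circ_{n-1}$-operation on $\phi$-images to line up with the $\circ_n$-operation, together with the repeated but essential use of the commutativity of $\psi(G)$ (Lemma \ref{lem}(\ref{comm})) to reorder and cancel terms on both sides. The boundary case $n=1$ needs no separate argument, since then $\psi_{n-1}=\psi_0=0$ is consistent with $\psi_1-\psi=0$, and $\circ_{n-1}=\circ_0=\cdot$.
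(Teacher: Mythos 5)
Your proof is correct and is essentially the paper's argument: both hinge on the recursion $\psi_n=\psi+\psi_{n-1}(1-\psi)$ from Lemma \ref{lem}(\ref{recur}) together with the commutativity of values in $\psi(G)$, and your right-hand-side computation is precisely the chain of equalities in the paper's proof read in reverse. The only difference is organizational: the paper frames the computation as an induction on $n$ (whose inductive hypothesis is never actually invoked), whereas you verify the identity directly for general $n$ by reducing both sides to $g\psi_n(g^{-1})h\psi_n(g)\psi(h^{-1}g^{-1})$, which is arguably cleaner.
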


\begin{proof}
	We shall prove this by induction on $n$. Since 
	\begin{align*}
		\phi(g\circ h) &= (g\circ h)\psi(g\circ h)^{-1}\\
		 &=g\psi(g^{-1})h\psi(g)\psi(g\psi(g^{-1})h\psi(g))\\
		 &=g\psi(g^{-1})h\psi(g)\psi(h^{-1}g^{-1})\\
		 & = g\psi(g^{-1})h\psi(h^{-1})\\
		 &= \phi(g)\cdot\phi(g)
	\end{align*}
we see that $\phi:(G,\circ_1)\to (G,\circ_0)$ is a homomorphism. Now suppose $\phi: \G{k}\to\G{k-1}$ is a homomorphism. Using  Lemma \ref{lem} (\ref{comm}),(\ref{recur}) we get 
\begin{align*}
	\phi(g\circ_{k+1} h) &= (g\circ_{k+1} h)\psi(g\circ_{k+1} h)^{-1}\\
	&= g\psi_{k+1}(g^{-1})h\psi_{k+1}(g)\psi(h^{-1}g^{-1})\\
	&= g\psi(g^{-1})\psi_{k}(g^{-1}\psi(g))h\psi(g)\psi_{k}(g\psi(g^{-1}))\psi(g^{-1}h^{-1})\\
	&= g\psi(g^{-1})\psi_{k}((g\psi(g^{-1}))^{-1})h\psi(h^{-1})\psi_{k}(g\psi(g^{-1}))\\
	&= \phi(g)\circ_{k}\phi(h)
\end{align*}
and hence $\phi:\G{k+1}\to\G{k}$ is a homomorphism.
\end{proof}

Once the above is established, we quickly get

\begin{corollary}\label{fpfblah}
	Let $\psi\in\Ab(G)$ be fixed point free, and let $n>m\ge 0$. Then $(G,\circ_{m},\circ_n)\cong (G,\cdot,\circ_{n-m})$.
\end{corollary}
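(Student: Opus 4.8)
The plan is to exhibit one map of the underlying set that is simultaneously an isomorphism $(G,\circ_m)\to(G,\cdot)$ and $(G,\circ_n)\to(G,\circ_{n-m})$; such a map is by definition a brace isomorphism $\Bee{m}{n}\to(G,\cdot,\circ_{n-m})$. The natural candidate is $\phi^m=(1-\psi)^m$, where $\phi=1-\psi$ is the map appearing in the preceding proposition. Recall $\circ_0=\cdot$, so the target second operation is indeed $\circ_{n-m}$ with $n-m\ge 0$ since $n>m$.

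The one substantive point---and the only place fixed-point-freeness is used---is the bijectivity of $\phi$ on the set $G$. Writing $\phi(g)=g\psi(g^{-1})=g\psi(g)^{-1}$, I would argue injectivity directly: if $\phi(g)=\phi(h)$ then $h^{-1}g=\psi(h)^{-1}\psi(g)=\psi(h^{-1}g)$, so $h^{-1}g$ is a fixed point of $\psi$ and hence trivial, giving $g=h$. As $G$ is finite, $\phi$ is then a bijection, and therefore so is $\phi^m$. (Equivalently, one can apply this same one-line argument to $\psi_m$, which is fixed point free by the earlier proposition, using $(1-\psi)^m=1-\psi_m$ from Lemma \ref{lem} (\ref{comp}).) I expect this bijectivity to be the main---indeed essentially the only---obstacle; everything else is formal.

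With bijectivity in hand, the rest is bookkeeping via the preceding proposition. Composing that proposition $m$ times shows $\phi^m$ is a group homomorphism $\G{k}\to\G{k-m}$ for every $k\ge m$, each intermediate index remaining nonnegative. Taking $k=m$ gives a homomorphism $(G,\circ_m)\to(G,\circ_0)=(G,\cdot)$, and taking $k=n$ (legal since $n>m$) gives a homomorphism $(G,\circ_n)\to(G,\circ_{n-m})$. A bijective homomorphism of finite groups is an isomorphism, so $\phi^m$ is at once an isomorphism for the first operations and for the second operations. Since it is one and the same map on $G$, this is precisely a brace isomorphism $\Bee{m}{n}\cong(G,\cdot,\circ_{n-m})$, completing the argument.
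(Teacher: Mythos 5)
Your proof is correct and follows essentially the same route as the paper: the paper likewise takes powers of $\phi=1-\psi$, uses the preceding proposition for the homomorphism property on both group structures, and uses fixed-point-freeness for bijectivity (citing a lemma of Gorenstein where you give a direct one-line injectivity argument). In fact your exponent is the right one---the paper's proof writes $\phi^{n-m}$, which appears to be a typo, since it is $\phi^m$ that carries $(G,\circ_m)$ to $(G,\cdot)$ and $(G,\circ_n)$ to $(G,\circ_{n-m})$.
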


\begin{proof}
	The map $\phi: G\to G$ as defined above is bijective if and only if $\psi$ is fixed point free \cite[Lemma 10.1.1]{Gorenstein68}; hence $\phi^{n-m}:\Bee{m}{n}\to (G\cdot,\circ_{n-m})$ is an isomorphism in this case.
\end{proof}

We will see in the next section that one can sometimes use fixed point free abelian maps to generate relatively large brace blocks.

\begin{example}
	Let $n$ be a odd integer, and let $D_n$ denote the dihedral group of order $2n$. Consider the group $D_n\times D_n$, presented as 
	\[G=\gen{r,s:r^n=s^2=rsrs=1_G}\times \gen{t,u : t^n=u^2=tutu=1_G}.\]
	Define $\psi:G\to G$ by $\psi(r)=\psi(t)=1_G,\;\psi(s)=u,\;\psi(u)=s$. Then $\psi\in\Ab(G)$.
	Since $su\in\psi(G)$ and $\psi(G)$ is abelian, for all $g\in G$ we have
	\[
	g\circ su = g\psi(g^{-1})su\psi(g) = gsu,\;su\circ g = (su)(us)g(us) = gsu,
	\]
	hence $su\in Z(G,\circ)$. Since $Z(G,\cdot)=Z(D_n)\times Z(D_n)$ is trivial we get that $(G,\circ)\not\cong (G,\cdot)$. Also, $(G,\circ)$ is nonabelian since $r\circ u = ru,\;u\circ r = r^{-1}u$. In fact, it can be shown that $(G,\circ)=\gen{us}\times ((\gen{r}\times \gen{t})\rtimes \gen{u})\cong C_2\times ((C_n\times C_n)\rtimes C_2)$ where the semidirect product arises from the map $C_2\to\Aut(C_n\times C_n)$ sending the nontrivial element to the inverse map. 
	
	Now we compute $\psi_2:G\to G$. It is easy to see that $\psi_2(r)=\psi_2(t)=1_G$; furthermore
	\[\psi_2(s)=(2\psi-\psi^2)(s) = \psi(1_G)\psi^2(s) = s,\]
	and $\psi_2(u)=u$ similarly. Since $G = G_0G_1$ where $G_0=\ker\psi_2$ and $G_1$ is the subgroup of fixed points of $\psi_2$, by \cite[Prop. 6.3]{Koch20} we have that $(G,\circ_2)\cong G_0\times G_1\cong (C_n\times C_n)\times (C_2\times C_2) \cong C_{2n}\times C_{2n}$. (Of course, that $(G,\circ_2)$ is abelian also follows easily by observing $(1-\psi)^2(G)=\gen{r,t}\cong C_h\times C_h$.) 
	
	Since $(G,\circ_2)$ is abelian, we have that $(G,\circ_n)=(G,\circ_2)$ for all $n\ge 2$ by Corollary \ref{abdone}. Thus we have $9$ braces, namely $(G,\circ_i,\circ_j)$ for $0\le i,j\le 2$, all pairwise nonisomorphic. If $i\ne 2$ then $\Bee{i}{j}$ gives us two solutions to the Yang-Baxter equation, whereas $\Bee{1}{j}$ gives us one. In total, we have $15$ solutions.
\end{example}
 
\section{Semidirect products of two cyclic groups} 

Finally, we present a class of examples which provide brace blocks containing many nonisomorphic braces. We gratefully acknowledge Lindsay Childs for pointing out this class of examples.

We will follow the notation in \cite{ChildsCorradino07}. Pick an integer $h\ge 3$, and let $F(h,k,b)=\gen{s,t:s^h=t^k=tst^{-1}s^{-b}=1_G}$ where $k\mid \phi(h)$ and $b$ has multiplicative order $k\pmod h$. We also have $F(h,k,b^n)$ for any $n\ge 0$: while $b^n$ may not have multiplicative order $k$, there is some $c$ of multiplicative order $k$ such that $F(h,k,b^n)=F(h,k,c)$. As we shall see, it will be useful to refer to such groups using powers of $b$.

We start by addressing isomorphism questions among these groups.

\begin{Lemma}
	Let $h,k,b$ be as above. Let $n\in\mathbb Z$, and let $d=\gcd(k,n)$. Then $F(h,k,b^n)\cong F(h,k,b^d)$.
\end{Lemma}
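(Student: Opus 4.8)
The plan is to realize both groups explicitly as semidirect products and then write down an isomorphism between them. Since $b$ has multiplicative order $k$ modulo $h$, the cyclic subgroup $\gen{b}\le(\mathbb Z/h)^{\ast}$ has order $k$, and the relation $tst^{-1}=s^{b^n}$ exhibits $F(h,k,b^n)$ as the semidirect product $\gen{s}\rtimes\gen{t}\cong C_h\rtimes C_k$ in which $t$ acts on $s$ by $s\mapsto s^{b^n}$; in particular every element is uniquely $s^i t^j$, so $|F(h,k,b^n)|=hk$, and likewise $|F(h,k,b^d)|=hk$. The key number-theoretic observation is that inside the order-$k$ cyclic group $\gen{b}$ one has $\gen{b^n}=\gen{b^{\gcd(n,k)}}=\gen{b^d}$, so the two actions generate the \emph{same} subgroup of $\Aut(C_h)$. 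This is what will force the two groups to be isomorphic.

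Concretely, I would construct a homomorphism $f:F(h,k,b^d)\to F(h,k,b^n)$, writing the source as $\gen{\sigma,\tau:\sigma^h=\tau^k=\tau\sigma\tau^{-1}\sigma^{-b^d}=1_G}$, and set $f(\sigma)=s$, $f(\tau)=t^a$ for a suitably chosen exponent $a$. For $f$ to respect the twisting relation I need $b^{na}\equiv b^d\pmod h$; since $d=\gcd(n,k)$ gives $\gcd(n/d,k/d)=1$, the residue $n/d$ is invertible modulo $k/d$, and choosing $a\equiv(n/d)^{-1}\pmod{k/d}$ yields $na=d+mk$ for some integer $m$, whence $b^{na}=b^{d}\,(b^{k})^{m}=b^{d}\pmod h$. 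With such an $a$ the three defining relations of the source are checked directly: $f(\sigma)^h=s^h=1_G$, $f(\tau)^k=t^{ak}=1_G$, and $f(\tau)f(\sigma)f(\tau)^{-1}=t^a s t^{-a}=s^{b^{na}}=s^{b^d}=f(\sigma^{b^d})$, so $f$ is a well-defined homomorphism.

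It then remains to see that $f$ is bijective, and since both groups have order $hk$ it suffices to prove surjectivity. As $s=f(\sigma)$ already lies in the image, I only need $t\in\gen{s,t^a}$, which holds precisely when $\gcd(a,k)=1$: passing to the quotient by the normal subgroup $\gen{s}$, the image of $t^a$ generates $C_k$ exactly when $a$ is a unit mod $k$. Here the choice of $a$ must be sharpened, because the congruence $a\equiv(n/d)^{-1}\pmod{k/d}$ only forces $\gcd(a,k/d)=1$, not $\gcd(a,k)=1$. The remedy is the standard surjectivity of the reduction map $(\mathbb Z/k)^{\ast}\to(\mathbb Z/(k/d))^{\ast}$ (valid since $(k/d)\mid k$): this lifts the unit $(n/d)^{-1}\bmod(k/d)$ to a unit $a\bmod k$ in the same residue class mod $k/d$, so that simultaneously $b^{na}\equiv b^d\pmod h$ and $\gcd(a,k)=1$. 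With this $a$, $t^a$ generates $\gen{t}$, hence $f$ is onto and therefore an isomorphism.

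The only genuine obstacle is this last point, namely arranging the exponent $a$ to be coprime to the full modulus $k$ while still inverting $n/d$ modulo $k/d$; everything else is bookkeeping with the semidirect-product structure and the identity $\gen{b^n}=\gen{b^d}$. I would therefore isolate the lifting statement (surjectivity of $(\mathbb Z/k)^{\ast}\to(\mathbb Z/(k/d))^{\ast}$) as the single external fact the argument relies on, and otherwise let the explicit generators-and-relations computation carry the proof.
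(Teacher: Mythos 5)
Your proof is correct, and at its core it is the same argument as the paper's: both fix $s$ and send the other generator to a power of $t$ whose exponent solves $(\text{exponent})\cdot n\equiv d\pmod k$, then deduce bijectivity from invertibility of that exponent modulo $k$. The one place you go beyond the paper is, in fact, the one place the paper is careless. The paper picks any $e$ with $en\equiv d\pmod k$ (justified by ``$\gcd(n/d,k)=1$,'' which should read $\gcd(n/d,k/d)=1$) and then asserts that ``$e$ is invertible mod $k$''; but that congruence only forces $\gcd(e,k/d)=1$, not $\gcd(e,k)=1$. For instance, with $k=12$, $n=8$, $d=4$, the value $e=2$ satisfies $8e\equiv 4\pmod{12}$ yet is not a unit modulo $12$ (while $e=5$ works). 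Your extra step---lifting the unit $(n/d)^{-1}\bmod(k/d)$ through the surjection $(\mathbb Z/k)^{\ast}\to(\mathbb Z/(k/d))^{\ast}$ to a genuine unit modulo $k$---is exactly what is needed to make the paper's assertion true, so your version is the more complete one. The remaining differences are cosmetic: you verify the three defining relations and invoke the universal property of the presentation, while the paper defines $\gamma$ on normal forms $s^ut^v$ and checks multiplicativity against the single relation $ts=s^{b^d}t$; these are interchangeable.
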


\begin{proof}
	Pick $e\in\mathbb Z$ such that $en\equiv d\pmod k$: such an $e$ exists since $\gcd(n/d,k)=1$. We define $\gamma:F(h,k,b^d)\to F(h,k,b^n)$ by $\gamma(s^ut^v)=s^ut^{ve}$. To see this is well-defined, observe that
	\[\gamma(ts)=\gamma(s^{b^d}t)=s^{b^d}t^e=s^{b^{en}}t^e=t^es=\gamma(t)\gamma(s).\]
	As $e$ is invertible mod $k$ it follows that $\gamma$ an isomorphism.
\end{proof}

When $h,k$ and $b$ are understood we will write $F_n=F(h,k,b^n)$ for brevity. 

\begin{Lemma}
	Let $n,d$ be as above, and suppose $h$ is prime. If $d\ne k$ then $|Z(F_n)|=d$; otherwise, $F_n$ is abelian.
\end{Lemma}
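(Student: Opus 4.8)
The plan is to analyze the center of $F_n = F(h,k,b^n) = \gen{s,t : s^h = t^k = tst^{-1}s^{-b^n} = 1_G}$ directly, using the defining relation $tst^{-1} = s^{b^n}$, which says that conjugation by $t$ acts on $\gen{s}$ as the automorphism $s \mapsto s^{b^n}$. Since $h$ is prime, $\gen s \cong C_h$ and every nonidentity power of $s$ generates it; the automorphism $s\mapsto s^{b^n}$ has order equal to the multiplicative order of $b^n \pmod h$. Because $b$ has order $k \pmod h$, the order of $b^n$ is $k/\gcd(k,n) = k/d$. I would first record this order computation, as it controls everything.

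Next I would determine which elements $s^u t^v$ are central. An element commutes with all of $G$ if and only if it commutes with both generators $s$ and $t$. First I would impose commutativity with $t$: conjugating $s^u t^v$ by $t$ fixes $t^v$ and sends $s^u$ to $s^{u b^n}$, so centrality with respect to $t$ forces $s^u = s^{u b^n}$, i.e. $u(b^n - 1) \equiv 0 \pmod h$. Since $h$ is prime, either $u \equiv 0$ or $b^n \equiv 1 \pmod h$; the latter happens exactly when $k/d = 1$, i.e. $d = k$. Then I would impose commutativity with $s$: writing out $s\cdot s^u t^v = s^u t^v \cdot s$ and pushing the $s$ on the right through $t^v$ (using $t^{-v} s t^v = s^{b^{nv}}$, or its inverse), centrality with respect to $s$ forces $s^{b^{-nv}} = s$, hence $b^{nv}\equiv 1 \pmod h$, which means $(k/d) \mid v$.

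Combining the two conditions, in the generic case $d \ne k$ I expect the center to consist exactly of the elements $s^0 t^v$ with $(k/d)\mid v$, that is $Z(F_n) = \gen{t^{k/d}}$, a cyclic group of order $k/(k/d) = d$; this gives $|Z(F_n)| = d$ as claimed. In the remaining case $d = k$, the order of $b^n$ is $1$, so $tst^{-1} = s$ and $s,t$ commute, making $F_n$ abelian, again matching the statement. I would organize the write-up as: (i) compute the order of the automorphism, (ii) derive the centrality-with-$t$ condition, (iii) derive the centrality-with-$s$ condition, (iv) assemble the two cases.

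The main obstacle I anticipate is purely bookkeeping in step (iii): correctly tracking the exponent of $s$ after conjugating by $t^v$ (the sign and the power $b^{nv}$ versus $b^{-nv}$) so that the resulting congruence $b^{nv}\equiv 1\pmod h$ comes out right, and then translating that cleanly into the divisibility $(k/d)\mid v$ via the order computation. A secondary subtlety is confirming that no ``mixed'' central elements $s^u t^v$ with both $u\ne 0$ and $v\ne 0$ survive when $d\ne k$: the $t$-condition already kills $u$ outright (since $h$ prime forces $u\equiv 0$ once $b^n\not\equiv 1$), so this is clean, but I would state it explicitly to avoid an overlooked case.
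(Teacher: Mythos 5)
Your proof is correct and follows essentially the same route as the paper's: both determine $Z(F_n)$ by writing a general element as $s^u t^v$ and imposing commutation with the two generators $s$ and $t$, using primality of $h$ to force $u\equiv 0 \pmod h$ unless $b^n\equiv 1\pmod h$ (the abelian case $d=k$). The only cosmetic difference is that the paper first invokes the preceding isomorphism lemma to reduce to the case $n=d$, whereas you work with $F_n$ directly via the observation that $b^n$ has multiplicative order $k/d$ modulo $h$; the computations are otherwise identical.
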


\begin{proof}
	In light of the previous result we may assume $n=d$. Suppose $s^ut^v\in Z(F_d)$. Then
	\[s^{u+b^{dv}}t=(s^ut^v)s = s(s^ut^v)=s^{u+1}t^v,\]
	from which it follows that $b^{dv}\equiv 1 \pmod k$, i.e., $k\mid dv$. Furthermore,
	\[ s^ut^{v+1} = (s^ut^v)t = t(s^ut^v) =s^{ub^d}t^{v+1},\]
	and hence $u\equiv ub^d\pmod h$. Since $h$ is prime, it follows that either $u=0$ or $b^d\equiv 1\pmod h$. But if $b^d\equiv 1\pmod h$ then $k\mid d$, so $k=d$ and $F_d=F_k\cong C_h\times C_k$. If $k\ne d$ then $u=0$. Thus,
	\[Z(F_d)=\begin{cases} \gen{t^{k/d}} & k < d\\ G & k=d \end{cases}.\] 
\end{proof}

Combining the two previous results gives us

\begin{proposition}
	Suppose $h$ is prime. then $F_m\cong F_n$ if and only if $\gcd(m,k)=\gcd(n,k)$.
\end{proposition}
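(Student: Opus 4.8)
The plan is to prove the biconditional by combining the two lemmas just proved. The statement says $F_m \cong F_n$ iff $\gcd(m,k) = \gcd(n,k)$, where throughout $h$ is assumed prime. Write $d_m = \gcd(m,k)$ and $d_n = \gcd(n,k)$. First I would dispose of the easy direction. Suppose $\gcd(m,k) = \gcd(n,k)$, i.e.\ $d_m = d_n =: d$. By the first lemma, $F_m \cong F(h,k,b^{d_m}) = F(h,k,b^d)$ and likewise $F_n \cong F(h,k,b^{d_n}) = F(h,k,b^d)$, so $F_m \cong F_n$ by transitivity of isomorphism. This direction requires nothing beyond the first lemma.

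For the forward (harder) direction, I would argue by contraposition: assume $\gcd(m,k) \ne \gcd(n,k)$ and show $F_m \not\cong F_n$. Set $d_m = \gcd(m,k)$, $d_n = \gcd(n,k)$, and without loss of generality assume $d_m \ne d_n$. The natural invariant to separate the two groups is the order of the center, which the second lemma computes exactly. Since $h$ is prime, the second lemma gives $|Z(F_m)| = d_m$ whenever $d_m \ne k$ (and $F_m$ abelian, so $|Z(F_m)| = hk$, when $d_m = k$), and analogously for $n$. The key step is to observe that the isomorphism class of $F_m$ determines $|Z(F_m)|$, and that this quantity distinguishes the groups: if both $d_m \ne k$ and $d_n \ne k$, then $|Z(F_m)| = d_m \ne d_n = |Z(F_n)|$, so they cannot be isomorphic. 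If exactly one of $d_m, d_n$ equals $k$—say $d_m = k$ and $d_n \ne k$—then $F_m$ is abelian while $F_n$ has center of order $d_n < k < hk = |F_n|$, so $F_n$ is nonabelian and again the two cannot be isomorphic. (The case $d_m = d_n = k$ is excluded since we assumed $d_m \ne d_n$.)

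I would package this as follows. The first lemma reduces every $F_m$ to $F(h,k,b^{d_m})$ with $d_m \mid k$, so I may assume $m = d_m$ and $n = d_n$ are divisors of $k$ outright; then $\gcd(m,k) = m$ and $\gcd(n,k) = n$, and the claim becomes $F_m \cong F_n \iff m = n$ for divisors $m,n$ of $k$. With this normalization the second lemma reads cleanly: $|Z(F_m)| = m$ if $m < k$ and $F_m$ abelian of order $hk$ if $m = k$. Since center order is an isomorphism invariant, distinct divisors give distinct center orders (noting $m < k \implies |Z(F_m)| = m < k < hk$, so a nonabelian $F_m$ can never be isomorphic to the abelian $F_k$), completing the contrapositive.

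The main obstacle, such as it is, lies in making the case analysis airtight rather than in any deep computation: one must be careful that the center-order invariant genuinely separates \emph{all} distinct divisor pairs, including the boundary case where one group is abelian (so its ``center order'' is the full group order $hk$ rather than a divisor of $k$). Once the reduction to divisors of $k$ is in place via the first lemma, and the abelian-versus-nonabelian dichotomy from the second lemma is invoked, the argument is essentially bookkeeping. I expect no serious difficulty, as both lemmas do the real work and this proposition is purely their synthesis.
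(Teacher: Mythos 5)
Your proof is correct and follows exactly the paper's intended argument: the paper states this proposition with only the remark ``Combining the two previous results gives us,'' and your write-up is precisely that combination --- the first lemma gives the reverse implication, and the center-order computation from the second lemma (with the abelian boundary case $d=k$ handled separately) gives the forward implication. Nothing is missing; you have simply made explicit the synthesis the paper leaves to the reader.
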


For the remainder of this section, we assume $h$ is prime.

We will now construct brace blocks starting with $G=F_1$. Pick $j\in \mathbb Z$ and define $\psi:G\to G$ by $\psi(s)=1_G,\;\psi(t)=t^{1-j}$. This map clearly respects the relations in $G$, and as $\psi(G)\le \gen{t}\cong C_k$ we see that $\psi\in\Ab(G)$. Note that if $j\equiv 1\pmod k$ then we will get a trivial brace block.

We shall now explicitly compute $\psi_n$. Note that
\[(1-\psi)(s)=s\psi(s^{-1})=s,\;(1-\psi)(t)=t\psi(t^{-1})=t\cdot t^{j-1}=t^j.\]
Thus,
\begin{align*}
	\psi_n(s)&=(-(1-\psi)^n+1)(s)=s^{-1}\cdot s=1_G\\
	\psi_n(t)&=(-(1-\psi)^n+1)(t)= t^{-j^n} \cdot t=t^{1-j^n}.
\end{align*}

Next, we compute the group $(G,\circ_n)$ for each $n$. Clearly, $s\circ_n g=sg$ for all $g\in G$; in particular, $s\circ_n t = st$. Also,

\begin{align*}
	t\circ_n t &= t\psi_n(t^{-1})t\psi_n(t) = t\cdot t^{j^n-1}\cdot t \cdot t^{1-j^n} = t^2\\
	t\circ_n s &= t\psi_n(t^{-1})s\psi_n(y) = t\cdot t^{j^n-1}\cdot s \cdot t^{1-j^n} = s^{b^{j^n}}t = s^{b^{j^n}}\circ_n t.
\end{align*}
Thus, $(G,\circ_n)=\gen{s,t:s^{\circ_n h}=t^{\circ_n k}=1_G,\;t\circ_n s = s\circ_n t^{b^{j^n}}\circ t}$ where ``$\circ_{n} m$'' in the exponent is the application of $\circ_n$ to the element $m$ times. In other words, $(G,\circ_n)=F_{j^n}$.

Notice that $(1-\psi)^n(s)=s$ and $(1-\psi)^n(t)=t^{j^n}$. Thus, $\G{n}=F_{j^n}$ is abelian if and only if $t^{j^n}\in Z(G)$. In particular, if $k\mid j^n$ then $(G,\circ_n)$ is abelian.

\begin{example}
	Suppose $k$ is prime. Then $h\equiv 1 \pmod k$ and $G=F_1$ is the unique nonabelian group of order $hk$. The abelian maps on $G$ appear in \cite[Ex. 3.8]{Koch20} using slightly different notation. If we pick $j$ such that $k\mid j$ then $\G{j}\cong C_h\times C_k$ and we get a specific instance of Example \ref{split}. 
	
	For all other choices of $j$ we have $\gcd(j^n,k)=1$ for all $n$, hence $\G{n}\cong G$ for all $n$. By \cite[Prop. 8.17]{KochTruman20b} we see that $(G,\cdot,\circ_m)\cong(G,\cdot,\circ_n)$ if and only if $1-j^m\equiv 1-j^n\pmod k$, i.e., $j^m\equiv j^n\pmod k$. Thus, if $j$ has multiplicative order $\ell\pmod k$ then $\{(G,\cdot,\circ_n): 0\le n \le \ell-1\}$ consist of $\ell-1$ distinct braces (note that $(G,\cdot,\circ_{\ell})=(G,\cdot,\cdot)$). By Corollary \ref{fpfblah} we see that all other braces in this brace block are isomorphic to one of braces in this set. 
	
	Since $(G,\cdot)=F_1$ is nonabelian we get $2\ell$ solutions to the Yang-Baxter equation. In fact, if we pick $j$ to be a primitive root $\bmod\;k$ then we obtain all $2(k-1)$ solutions to the Yang-Baxter equation arising from a brace with both groups isomorphic to the metacyclic group. Indeed, by \cite{KochTruman20b} each brace with both groups metacyclic is either in the block above or is the opposite to a brace in the block above. Thus, we have constructed all solutions to the Yang-Baxter equation coming from braces with metacyclic group structures of order $hk$.
\end{example}

\begin{example}
	Let $N$ be a large integer. By Dirichlet's Theorem, there exist prime numbers of the form $\ell\cdot 2^N+1$; let $h$ be one such prime, and let $k=2^N$. Pick $j=2$, i.e., $\psi(t)=t^{-1}$. Then $(G,\circ_m)=F_{2^m}$ for all $m$, and since $\gcd(2^m,2^N)=2^{\min\{m,N\}}$ we get that $(G,\circ_m)\not\cong(G,\circ_n)$ for $0\le m < n \le N$ and $(G,\circ_m)\cong(G,\circ_N)\cong C_{\ell\cdot 2^N+1}\times C_{2^N}\cong C_{(\ell+1)\cdot 2^{N}+1}$ for all $m\ge N$. Thus, we get a brace block consisting of $N$ pairwise nonisomorphic groups. 
	
	It follows that our brace block contains $(N+1)^2$ different braces, of which $N^2$ consist of nonabelian groups (obtained by requiring $m,n\ne N$), giving $4N^2$ solutions to the Yang-Baxter equation; $N$ braces containing exactly one abelian group, giving another $2N$ solutions; and $1$ brace with both groups abelian, giving one more solution. In total, we get
	$4N^2+2N+1=(2N+1)^2$
	solutions to the Yang-Baxter equation. 
	
\end{example}

These examples show that there are brace blocks with an arbitrarily number of pairwise non-isomorphic groups, and that the number of possible solutions to the YBE using an abelian map is unbounded.

\bibliographystyle{alpha} 
\bibliography{../../MyRefs}
\end{document}